
 


\documentclass[12pt]{amsart}

\pagestyle{myheadings}

\addtolength{\oddsidemargin}{-2cm} 



\addtolength{\textwidth}{4cm}

\addtolength{\evensidemargin}{-2cm}





\vfuzz2pt 

\hfuzz2pt 




%

\newtheorem{theorem}{Theorem}[section]

\newtheorem{corollary}[theorem]{Corollary}

\newtheorem{lemma}[theorem]{Lemma}

\theoremstyle{definition}

\newtheorem{definition}[theorem]{Definition}

\newtheorem{remark}[theorem]{Remark}

\newtheorem{example}[theorem]{Example}

\theoremstyle{parrafo}


\begin{document}

\title[]{Local comparability of measures, averaging and  maximal averaging operators}

\author{J. M. Aldaz}
\address{Instituto de Ciencias Matem\'aticas (CSIC-UAM-UC3M-UCM) and Departamento de 
Matem\'aticas,
Universidad  Aut\'onoma de Madrid, Cantoblanco 28049, Madrid, Spain.}
\email{jesus.munarriz@uam.es}
\email{jesus.munarriz@icmat.es}

\thanks{2000 {\em Mathematical Subject Classification.} 42B25}

\thanks{The author was partially supported by Grant MTM2015-65792-P of the
MINECO of Spain, and also by by ICMAT Severo Ochoa project SEV-2015-0554 (MINECO)}







\begin{abstract} We explore the consequences for the boundedness properties of averaging and
maximal averaging operators, of the following local comparabiliity 
 condition for measures: Intersecting balls of the same radius
have comparable sizes.  Since in geometrically doubling spaces this property 
yields the same results as doubling, we study under which circumstances it is equivalent to
the latter condition, and when it is more general.
We also study the concrete case of the standard gaussian measure, where this
property fails, but nevertheles averaging operators are uniformly bounded, with respect to the radius,  in $L^1$.
However, such bounds grow exponentially fast with the dimension, very much unlike the case of Lebesgue measure.
\end{abstract}


\maketitle


\markboth{J. M. Aldaz}{Local comparability of measures}

\section {Introduction} 

In the development of analysis in metric spaces, the doubling condition on a measure
has played a considerable role, cf. for instance \cite{He}, \cite{HKST} and the references contained
therein. However, the existence of a 
doubling measure imposes severe restrictions on the growth of the  spaces
under consideration: They must be geometrically doubling (of homogeneous type in the
terminology of \cite{CoWe1}, cf. Definition \ref{geomdoub} below; we always assume that measures are not
identically 0). This excludes many spaces
of interest such as,
for instance, hyperbolic spaces, as well as several other manifolds with  negative curvature.

Hence, there have been efforts to remove or at least weaken the doubling 
condition whenever 
possible. A considerable amount of work has been made in this area, regarding singular
integrals and Calder\'on-Zygmund operators
cf., for instance,
\cite{To}  and the references contained
therein. Here we are interested in the boundedness properties of the Hardy-Littlewood
maximal function  defined by a Borel measure $\mu$. Good boundedness results
 appear to be related to the following property,
studied in this paper: Intersecting balls
with the same radius have a comparable size (cf. Definition \ref{loccomp} below for the precise statement). 
Since this hypothesis
 does not apply (at least, not directly) to balls that fail to intersect, we shall say that
 $\mu$ satisfies a {\em local} comparability condition, even though it applies to balls both large and
 small, and not just to small balls. 
 
 The idea of using local comparabilitiy  instead
 of doubling is suggested by \cite{NaTa}, where this ``uniformity condition" (as is called there, cf. 
 p. 737) is combined with the notion of microdoubling to define ``strong microdoubling".
 But local comparability by itself is worthy of study, independently of any microdoubling conditions.
 In geometrically doubling spaces,
 this more precise hypothesis yields the same results as doubling, and is sometimes equivalent
 to it. But in general, it is satisfied by a wider class of measures.

 A second source of interest comes from attempts to understand which factors influence
 the size of bounds for averaging and maximal averaging operators. This leads us
 to consider measures for which local comparability is missing. But even for a doubling measure,
 if one is interested in quantitative aspects of the bounds,
 one may want to keep track of the local 
 comparability constant,  which often will be much smaller than the doubling
 constant (for instance, 1 and $2^d$ for $d$-dimensional Lebesgue measure).
 
 This paper is organized  as follows. In Section 2 we review some standard
 definitions and facts,  introducing also the terminology of blossoms. 
 The $r$-blossom of a
 set is the just its $r$-neighborhood, but we find the new terminology more convenient
 when talking about properties of measures. In particular, to carry out the usual Vitali covering
 argument, instead of doubling it is enough to assume that the measure blossoms boundedly
 (cf. Definition \ref{blossom} below). 
 
It is natural to ask which properties of the measure determine, or have
an influence, in the behaviour of the bounds satisfied by averaging and 
maximal averaging operators. In this regard,
  Section 3 contains the definition of local comparability, and Section 4 considers  averaging operators when local comparability is
 missing. In general metric spaces, without local comparability averaging operators
 may fail to be bounded for all $p < \infty$. Nevertheless, in  the special case of $\mathbb{R}^d$ with
 the standard gaussian measure,  we show that 
 $\sup_{r>0}\|A_r\|_{L^1\to L^1} \le (2 + \varepsilon)^d$ for every $\varepsilon > 0$ and $d$ large enough. 
However, lack of local comparability
makes itself felt in the fact that these bounds  grow exponentially fast 
with the dimension,  for all $1 \le p < \infty$.
Using an argument of A. Criado and P. Sj\"ogren, we show that 
for  every $p$ in $[1,\infty)$ and every $d$ sufficiently large, the weak type $(p,p)$
constants satisfy $\|A_{\frac{\sqrt{3d - 3}}{2}} \|_{L^p\to L^{p, \infty}} >1.019^{d/p}$. 

In the case of Lebesgue measure in $\mathbb{R}^d$,  E. M. Stein showed that for the centered
maximal function $M$  associated to euclidean balls,
the best strong type $(p,p)$ bounds  are independent of $d$, and hence of the doubling constant
$2^d$  (\cite{St1}, \cite{St2},
\cite{StSt}, see also \cite{St3}). In fact, for $p \ge 2$, P. Auscher and M. J. Carro gave the explicit  bound
$\|M\|_{L^p\to L^p} 
\le (2 + \sqrt{2})^{2/p}$ (\cite{AuCa}).
Comparing the situation with the gaussian measure, we see that
taking the supremum over radii can have a much smaller impact on the size of constants than considering 
measures without local comparability.

Stein's result was
generalized to the maximal function defined using an arbitrary norm by J. Bourgain (\cite{Bou1},
\cite{Bou2}) and A. Carbery (\cite{Ca}) when $p>3/2$.
For $\ell_q$ balls, $1\le q <\infty$, D. M\"{u}ller \cite{Mu}
showed that uniform bounds again hold for every $p
> 1$ (given $1\le q <\infty$, the $\ell_q$ balls
are defined using the norm $\|x\|_q :=\left( |x_1|^q+ |x_2|^q+\dots + |x_d|^q\right)^{1/q}$). Finally,
in  \cite{Bou3}, Bourgain proved that for cubes (balls with respect to the $\ell_\infty$ norm)  the uniform
bounds hold for every $p > 1$. Since  for cubes it is known that the weak type (1,1) constants diverge to
infinity (cf. \cite{A}, and  for the highest lower bounds currently known, cf.  \cite{IaSt}) 
this is the only case where a fairly complete picture is available. Now if the local comparability constant
is a key factor here, then 
one would expect that maximal functions defined using Lebesgue measure and different balls,
would all behave in a similar way, both regarding weak and strong type constants. 
But I have made no progress in this direction.  
 
Since in geometrically doubling 
metric spaces local comparability yields the same boundedness results as doubling, 
it is natural to enquire to what extent the first property is more general than the second. 
This is done in Section 4, where among other results, it is proven that for  geometrically doubling spaces that are
 quasiconvex, or  have the approximate midpoint property, or where  all the  balls are connected, 
 local comparability is equivalent to doubling. In particular, this is the case for euclidean spaces. 
 However,
we shall also see that there is an arc-connected, geometrically doubling 
metric space, with a non-doubling measure satisfying a local comparability
condition. Generally speaking, in spaces with poor conectivity properties, or with
large gaps (as is often the case, for instance, with fractals), or where the
``intrinsic" and ``ambient" metrics are not comparable, the two classes of measures
can be quite different. 

While the existence of a doubling measure imposes restrictions on the growth of the space, this is
not the case with local comparability, which is just a uniformity condition. It may well be that local comparabity
 can yield positive results beyond geometrically doubling spaces,
but I have made no progress in this direction.

\section {Notation and background material} 

We will use $B(x,r) := \{y\in X: d(x,y) < r\}$ to denote open balls, 
$\overline{B(x,r)}$ to denote their topological closures, and 
$B^{cl}(x,r) := \{y\in X: d(x,y) \le r\}$ to refer to closed balls (consider
$\overline{B(0,1)}$ in $\mathbb{Z}$ to see the difference).

\begin{definition} We say that $(X, d, \mu)$ is a {\em metric measure space} if
$\mu$ is a Borel measure on the metric space $(X, d)$, such that for all
balls $B(x,r)$, $\mu (B(x,r)) < \infty$, and furthermore, $\mu$ is {\em $\tau$-smooth}. 
A Borel measure is $\tau$-smooth if for every
collection  $\{U_\alpha : \alpha \in \Lambda\}$
 of  open sets, $\mu (\cup_\alpha U_\alpha) = \sup \mu(\cup_{i=1}^nU_{\alpha_i})$,
 where the supremum is taken over all finite subcollections of $\{U_\alpha : \alpha \in \Lambda\}$.
\end{definition} 

In separable metric spaces, arbitrary unions of open sets can be reduced to
countable unions, so $\tau$-smoothness is an
immediate consequence of  countable additivity; trivially also, all Radon measures are $\tau$-smooth.
The hypothesis of $\tau$-smoothness is rather weak, since it is consistent with standard set theory
(Zermelo-Fraenkel with Choice) that  in every metric space, every Borel measure  which
assigns finite measure to balls  is $\tau$-smooth 
(cf. \cite[Theorem (a), pg. 59]{Fre}).  Thus, in standard mathematical practice we will never
encounter an example where $X$ is metric and $\mu$ fails to be $\tau$-smooth.

\begin{definition}\label{maxfun} Let $(X, d, \mu)$ be a metric measure space and let $g$ be  a locally integrable function 
on $X$. For each fixed $r > 0$ and each $x\in X$ such that $0 < \mu (B(x, r))$, the
averaging operator $A_{r, \mu}$ is defined as
\begin{equation}\label{avop}
A_{r , \mu} g(x) := \frac{1}{\mu
(B(x, r))} \int _{B(x, r)}  g d\mu.
\end{equation}
In addition, the centered Hardy-Littlewood maximal operator $M_{\mu}$ is given by
\begin{equation}\label{HLMFc}
M_{\mu} g(x) := \sup _{\{r > 0:  \mu B(x, r)  > 0\}} A_{r , \mu} |g|(x),
\end{equation}
while the uncentered Hardy-Littlewood maximal operator $M^u_{\mu}$ is defined via
\begin{equation}\label{HLMFu}
M^u_{\mu} g(x) := \sup _{\{r > 0, y\in X :  \  d(x,y) < r \mbox{ and } \mu B(y, r)  > 0\}} A_{r , \mu} |g|(y).
\end{equation}
\end{definition}

According to our convention, averaging operators are defined almost everywhere (since by
$\tau$-smooth\-ness the complement of the support has measure zero) while maximal
operators are defined everywhere, for given any $x\in X$ there exists an $r >0$ such
that $\mu B(x,r) > 0$. Also, maximal operators can be defined using closed balls instead of open balls,
and this does not change their values, since open balls can be approximated from within
by closed balls, and closed balls can be approximated from without by open balls. When
the measure is understood, we will omit the subscript $\mu$  from 
$A_{r,\mu}$,  $M_\mu$, and $M^u_\mu$. 

For a given $p$ with $1 \le p < \infty$,
an operator $T$ satisfies a
weak type $(p,p)$ inequality if there exists a constant $c > 0$ such that
\begin{equation}\label{weaktypep}
\mu (\{T g \ge \alpha\}) \le \left(\frac{c \|g\|_{L^p(\mu)}}{\alpha}\right)^p,
\end{equation}
where $c=c(p,  \mu)$ depends neither on $g\in L^p (\mu)$
nor on $\alpha > 0$. The lowest constant $c$ that satisfies the preceding
inequality is denoted by $\|T\|_{L^p\to L^{p, \infty}}$.
Likewise, if there exists a constant $c > 0$ such that
\begin{equation}\label{strongtypep}
\|T g \|_{L^p(\mu)}  \le c \|g\|_{L^p(\mu)},
\end{equation}
we say that $T$ satisfies a
strong type $(p,p)$ inequality.  The lowest such constant (the operator norm) 
 is denoted by $\|T\|_{L^p\to L^{p}}$.

\begin{definition} A Borel measure $\mu$ on $(X,d)$ is {\em doubling}  if there exists a 
$C> 0 $ such that for all $r>0 $ and all $x\in X$, $\mu (B(x, 2 r)) \le C\mu(B(x,r)) < \infty$. 
\end{definition}

The following definition comes essentially from \cite[p.  739]{NaTa}, but the terminology is new.

\begin{definition} \label{blossom} Given a set $S$ we define its $s$-{\em blossom} as the enlarged set
\begin{equation} \label{altblossom}
Bl(S, s):= \cup_{x\in S}B(x,s),
 \end{equation}
 and its  {\em uncentered $s$-blossom} as the set
\begin{equation} \label{altublossom}
Blu(S, s):= \cup_{x\in S}\cup\{B(y, s): x\in B(y, s)\}.
 \end{equation}
When $S= B(x,r)$, we simplify the notation and write  $Bl(x,r, s)$, 
instead of $Bl(B(x,r), s)$, and likewise for uncentered blossoms. In the latter case, we allow  $r=0$: 
\begin{equation} \label{pointublossom}
Blu(x, 0, s):= \cup\{B(y, s): x\in B(y, s)\}.
 \end{equation}
 We say that $\mu$ {\em blossoms boundedly} if there exists a $K\ge 1$ such that
 for all $r>0 $ and all $x\in X$, $\mu (Blu(x, r, r)) \le K\mu(B(x,r)) < \infty$. 
\end{definition}

\begin{remark} Note that $Bl(S, s)$ is just the $s$-neighborhood of $S$, and the uncentered
blossom is just an abbreviation for the iterated blossom: $Blu(S, s) = Bl (Bl (S, s), s)$.
The introduction of the new notation
 is motivated by the fact that often we will be
blossoming balls $B(x, r)$, and it will be convenient to keep track both of $r$ and $s$.  
\end{remark}

\begin{remark} Blossoms can be defined using closed instead of open balls, in
an entirely analogous way. We mention that in the euclidean case, and more generally, in spaces with the approximate
midpoint property (cf. Definition \ref{amp}) there is no 
difference between $Bl(x,r, r) $ and $B(x, 2r)$, nor between $Blu(x,r, r) $ and $B(x, 3r)$, see \cite[Theorem 2.13]{A2}.
But in general,  balls  are larger. 
\end{remark}

Geometrically doubling spaces have received many names. In
the book \cite{CoWe1}, these spaces are called (in french) spaces of homogenous
type. However, in the paper \cite{CoWe2}, the authors switched notation and started
calling spaces of homogenous
type to those endowed with a doubling measure, after which this became the more common
terminology. Both kinds of spaces (geometrically doubling and with a doubling measure)
have also been called ``doubling spaces", which is why I am avoiding this expression.

\begin{definition} \label{geomdoub} A metric space is {\it $D$-geometrically doubling}  if there exists a positive
integer $D$ such that every ball of radius $r$ can be covered with no more than $D$ balls
of radius $r/2$. 
\end{definition}

Of course,  metric spaces endowed with a doubling measure, 
and geometrically doubling spaces, are closely related, the latter being a generalization
of the former. If $ \mu$ on $X$ is doubling,
then $X$ is geometrically doubling, cf. \cite[Remarque, p. 67]{CoWe1} (but this is
not necessarily the case for measures that blossom boundedly, see Theorem \ref{bb} below).
For a trivial  example of a geometrically doubling space which does not
carry any doubling measure, just consider $\mathbb{Q}$. For a less
trivial example, there are open subsets of $\mathbb{R}$ which do not carry any
doubling measures (cf. \cite[Remark 13.20 (d)]{He}). But if the geometrically doubling space
is complete, then a doubling measure can be defined on it (cf. \cite{LuSa}).

Arguments in analysis that rely on covering theorems often extract a disjoint
collection from the original cover, in such a way that not too much measure
is disregarded.  Now the doubling condition gives us control on the size of all
balls contained in $B(x, 2r)$, regardless of whether they intersect $B(x,r)$ or not.
Since to disjointify we only need to consider balls that do intersect $B(x,r)$, it is advantageous to use
$Bl(x,r,r)$ and $Blu(x, r,r)$ instead of $B(x, 2r)$ and $B(x, 3r)$. The idea of using blossoms can be found in \cite{Li},
for locally compact amenable groups (where in principle there are no balls); 
and  in the metric setting,  it comes from   \cite{NaTa}.
Next  we rewrite,   for the
reader's convenience, a well known argument, using the terminology of blossoms. 

Given a ball $B(x_i, r_i)$ we  shall sometimes use $B_i$ as an abbreviation. It is understood
not only that $B_i= B(x_i, r_i)$ as a set, but also that $x_i$ and $r_i$ are a selected center
and a selected radius of $B_i$ (recall that in arbitrary metric spaces, neither the center nor
the radius of a ball are in general unique). In particular, it might happen that $B_i = B_j$ for some pair $i\ne j$. 

When a measure blossoms boundedly, the
following version of the Vitali covering lemma holds.

\begin{theorem}\label{vitali} {\bf Vitali covering lemma.} Let $(X, d, \mu)$ be a metric measure space.
Assume there exists a constant $K \ge 1$ such that for every $x\in X$ and every
 $r > 0$, $\mu(Blu(x,r, r))\le K \ \mu(B(x,r))$. 
 Then,  for every finite collection of balls $B(x_1, r_1), \dots , B(x_n, r_n)$,
there exists a disjoint subcollection $B(x_{i_1}, r_{i_1}), \dots , B(x_{i_m}, r_{i_m})$
with
\begin{equation}\label{vitali2} 
\mu (\cup_{i=1}^n  B(x_i, r_i)) \le K \  \mu (\cup_{j=1}^m  B(x_{i_j}, r_{i_j})).
 \end{equation}
 \end{theorem}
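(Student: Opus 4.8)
The plan is to run the greedy selection that underlies the classical Vitali lemma, but to phrase the geometry entirely in terms of uncentered blossoms, so that the single hypothesis $\mu(Blu(x,r,r)) \le K\,\mu(B(x,r))$ carries the whole argument. First I would reorder the balls so that their radii are nonincreasing, say $r_1 \ge r_2 \ge \dots \ge r_n$, breaking ties arbitrarily. Then I would extract a disjoint subcollection greedily: set $B_{i_1} := B(x_1,r_1)$; having chosen $B_{i_1}, \dots, B_{i_j}$, discard every ball still under consideration that meets one of them, and let $B_{i_{j+1}}$ be the first surviving ball in the ordering. Since a newly selected ball survives all previous discards, it is disjoint from every earlier choice, so the output $B_{i_1}, \dots, B_{i_m}$ is a disjoint family. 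Moreover, because at each stage the selected ball has the smallest index, hence the largest radius, among the survivors, every discarded ball $B(x_i,r_i)$ meets some selected ball $B(x_{i_j}, r_{i_j})$ with $r_{i_j} \ge r_i$.

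The geometric heart of the matter is the claim that whenever $B(x_i,r_i)$ meets $B(x_{i_j}, r_{i_j})$ with $r_{i_j} \ge r_i$, one has $B(x_i,r_i) \subseteq Blu(x_{i_j}, r_{i_j}, r_{i_j})$. To verify this I would pick a point $z \in B(x_i,r_i) \cap B(x_{i_j}, r_{i_j})$. Then $z$ lies in $B(x_{i_j}, r_{i_j})$, and since $d(x_i,z) < r_i \le r_{i_j}$ we also have $z \in B(x_i, r_{i_j})$; thus $B(x_i, r_{i_j})$ is one of the balls of radius $r_{i_j}$ containing the point $z$ of $B(x_{i_j}, r_{i_j})$, and so it sits inside the uncentered blossom by its very definition. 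Finally $r_i \le r_{i_j}$ gives $B(x_i,r_i) \subseteq B(x_i, r_{i_j})$, which completes the containment.

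I expect this blossom containment to be the only step requiring care. It is precisely the \emph{uncentered} (iterated) blossom, together with the fact that the ball we absorb is never larger than the selected ball, that lets me enclose $B(x_i,r_i)$ with no quantitative triangle-inequality constant; this is the feature that replaces the Euclidean ``factor $3$'' and makes the argument work in an arbitrary metric space, where centers and radii need not even be unique. Everything else is bookkeeping enabled by the finiteness of the collection, which guarantees that the greedy process terminates.

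With the containment established the estimate is immediate: every original ball lies in the blossom of some selected ball, so $\cup_{i=1}^n B(x_i,r_i) \subseteq \cup_{j=1}^m Blu(x_{i_j}, r_{i_j}, r_{i_j})$. Applying subadditivity of $\mu$, then the blossoming hypothesis to each selected ball, and finally the disjointness of the selected family, I obtain
$$\mu\Big(\cup_{i=1}^n B(x_i,r_i)\Big) \le \sum_{j=1}^m \mu\big(Blu(x_{i_j}, r_{i_j}, r_{i_j})\big) \le K \sum_{j=1}^m \mu\big(B(x_{i_j}, r_{i_j})\big) = K\,\mu\Big(\cup_{j=1}^m B(x_{i_j}, r_{i_j})\Big),$$
which is exactly the asserted inequality (\ref{vitali2}).
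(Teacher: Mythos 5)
Your proof is correct and follows essentially the same route as the paper's: greedy selection in order of decreasing radii, followed by absorbing each discarded ball into the uncentered blossom $Blu(x_{i_j}, r_{i_j}, r_{i_j})$ of the selected ball that caused its discard, and then applying the blossoming hypothesis together with disjointness. The only difference is that you spell out the containment $B(x_i,r_i)\subseteq Blu(x_{i_j},r_{i_j},r_{i_j})$ explicitly, a step the paper leaves implicit.
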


\begin{proof} We may assume that the original collection, which we will abbreviate by
$B_1, \dots, B_n$, is ordered by decreasing radii. Let $B_{i_1} := B_1$, select $B_{i_2}$
to be the first ball in the list not intersecting $B_{i_1}$, and in general, choose 
$B_{i_k}$ as the first ball in the list not intersecting any of the previously selected balls.
Then the process finishes after a finite number of steps with, let's say, the ball $B_{i_m}$.

We need to control the mass lost with the balls not chosen. Let 
$B_{j_1}^1, \dots,   B_{i_k}^1$ be the collection of all balls intersecting $B_{i_1}$.
Then $\cup_{s=1}^k  B^1_{i_s} \subset Blu(x_1, r_1, r_1)$, so 
$\mu(\cup_{s=1}^k  B^1_{i_s} )\le \mu(Blu(x_1,r_1, r_1))\le K \ \mu(B(x_1,r_1))$. Repeating this
argument with the other balls, we obtain (\ref{vitali2}). 
\end{proof}

The preceding theorem entails the weak type (1,1) of the maximal
operator, and by interpolation,
the corresponding strong  type bounds. One also has the Lebesgue Differentiation
Theorem for measures that blossom boundedly, as in the case of doubling measures.

Regarding the strong type bounds, we mention that once an averaging or maximal
averaging operator is bounded in $L^r$ for some $r >0$, it is bounded in 
$L^p$ for all $p > r$, with
operator norm that approaches 1  (something that is not always
observed in published results). There is no need to use Riesz-Thorin (for
positive sublinear operators) to obtain this, it immediately follows from Jensen's inequality.

\begin{theorem}\label{rieszbds} Let $(X, \mathcal{A}, \mu)$ be a 
measure space, let $0 < r <\infty$, and let 
$T$ be an averaging or maximal averaging operator, bounded on $L^r(X,\mu)$ and with
operator norm $c_r$.
Then
$T$ is bounded on $L^p$ for all $p \ge r$, with operator norm
$c_p\le c_r^{r/p}$. 
\end{theorem}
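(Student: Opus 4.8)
The plan is to exploit the convexity of the map $t\mapsto t^{p/r}$, which is legitimate because $p\ge r$ forces the exponent $p/r$ to be at least $1$; combined with Jensen's inequality this yields everything, with no recourse to interpolation, as anticipated in the remark preceding the statement. The first and main step is to establish the pointwise inequality
\begin{equation*}
|Tg|^{p/r}\le T\bigl(|g|^{p/r}\bigr),\qquad g\in L^p(\mu).
\end{equation*}
An averaging operator has the form $Tg(x)=\int g\,d\nu_x$, where each $\nu_x$ is a probability measure (in the metric setting of Definition \ref{maxfun}, $\nu_x=\mu(B(x,\rho))^{-1}\,\mu|_{B(x,\rho)}$ for the relevant radius $\rho$). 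Hence Jensen's inequality applied to the convex function $t\mapsto t^{p/r}$ gives $\bigl(\int |g|\,d\nu_x\bigr)^{p/r}\le\int |g|^{p/r}\,d\nu_x$, that is, $\bigl(T|g|\bigr)^{p/r}\le T(|g|^{p/r})$ pointwise, and the elementary bound $|Tg|\le T|g|$ disposes of signs. A maximal averaging operator is a supremum of averaging operators, $Tg(x)=\sup_i\int |g|\,d\nu_{i,x}$; since $t\mapsto t^{p/r}$ is increasing it commutes with the supremum, so raising to the power $p/r$ and then applying the pointwise Jensen bound inside the supremum gives the same inequality $(Tg)^{p/r}\le T(|g|^{p/r})$.

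With this in hand, the second step is the substitution $h:=|g|^{p/r}$, which is the device converting an $L^p$ estimate for $g$ into the available $L^r$ estimate for $h$. One checks that $\|h\|_{L^r}^r=\int |g|^p\,d\mu=\|g\|_{L^p}^p$, so $g\in L^p(\mu)$ if and only if $h\in L^r(\mu)$, which is precisely the space on which $T$ is assumed bounded. Raising the pointwise inequality to the power $r$ and integrating then produces
\begin{equation*}
\|Tg\|_{L^p}^p=\int |Tg|^p\,d\mu\le\int (Th)^r\,d\mu=\|Th\|_{L^r}^r\le c_r^r\,\|h\|_{L^r}^r=c_r^r\,\|g\|_{L^p}^p,
\end{equation*}
and taking $p$-th roots yields $\|Tg\|_{L^p}\le c_r^{r/p}\|g\|_{L^p}$, the claimed bound $c_p\le c_r^{r/p}$.

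I do not expect a serious obstacle here; the argument is short and elementary once the convexity trick is spotted. The only points deserving a moment's care are that the supremum defining a maximal operator genuinely commutes with the monotone power function, so that the exponent may be moved inside the supremum, and that $|g|^{p/r}$ really belongs to $L^r$, so that the hypothesis $\|Th\|_{L^r}\le c_r\|h\|_{L^r}$ is applicable to $h$. I would also keep in mind the convention from Definition \ref{maxfun} that averages are formed only where the underlying balls have positive measure, so that the probability measures $\nu_x$ are well defined wherever $T$ acts.
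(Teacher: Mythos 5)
Your proof is correct and follows essentially the same route as the paper's: both rest on the pointwise Jensen inequality $[T(f)]^{p/r}\le T(f^{p/r})$ (valid since the averages are taken against probability measures and $p/r\ge 1$), followed by applying the $L^r$ bound to $f^{p/r}$ and taking $p$-th roots. Your write-up merely makes explicit the details the paper compresses, such as the supremum commuting with the increasing power function in the maximal case.
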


\begin{proof} 
For all $p$ such that $r < p < \infty$, and all
$f\in L^p$, $f\ge 0$, we have $f^{p/r}\in L^r$, so by Jensen's inequality,
$$
\int [T( f)]^p d\mu \le \int [T( f^{p/r})]^r d\mu \le c_r^r \int f^{p} d\mu,
$$
and  the result follows by taking  $p$-th roots.
\end{proof} 

\section{Local comparability} 

\begin{definition} \label{loccomp}  We say that a measure $\mu$ satisfies a {\it local comparability condition for the radius $r$} 
if there
exists a constant $C\in[1, \infty)$ such that for  all pairs of points $x,y\in X$,
whenever $d(x,y) < r$, we have 
$$\mu(B(x,r))\le C \mu(B(y,r)).$$
If the constant $C$ can be chosen to be independent of $r$, then we say that
 $\mu$ satisfies a {\it $C$ local comparability condition}.
 We denote the
smallest such $C$ by $C(\mu)$ or $C_\mu$.
\end{definition}

Interchanging $x$ and $y$ in the preceding definition leads to 
$$
\frac{1}{C} 
\le
\frac{\mu(B(x,r))}{\mu(B(y,r))}  \le C,
$$
provided $\mu(B(x,r)) >0$ (of course, $\mu(B(x,r)) = 0$ if and only if $\mu(B(y,r)) = 0$).   While  
it is always
possible to assume that $\mu$ has full support, 
by disregarding, if needed, a measure zero set, this can lead to substantial changes
in the geometry of the resulting space, since many properties are not inherited by subsets.
So even though we will always suppose that $\mu$ is not identically $0$, full support will not be assumed.

\begin{example} \label{ultra}Suppose $(X,d)$ is an ultrametric space (so the triangle inequality
is replaced by the stronger condition $d(x,y) \le \max\{d(x,z), d(z,y)\}$). It follows that
$B(x,r) = B(y, r)$
whenever
$d(x,y) < r$, so for every measure $\mu$ on $X$, the 
local comparability condition is trivially satisfied, with $C(\mu)=1$.
\end{example} 

In order to use uncentered blossoms, the following obvious estimate is useful.

\begin{lemma} \label{c2} Let $(X, d, \mu)$ be a metric measure space, and
let $\mu$ satisfy a $C$ 
local comparability
 condition. If $B(x,r)\cap B(y,r)\ne \emptyset$, then 
 $\mu(B(x,r))\le C_\mu^2 \ \mu(B(y,r))$.
\end{lemma}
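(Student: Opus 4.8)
The plan is to reduce the intersection hypothesis to two applications of the local comparability condition by routing through a point lying in the common intersection. First I would fix a point $z \in B(x,r)\cap B(y,r)$, which exists precisely because the intersection is assumed nonempty. Unwinding the definitions of the two balls gives $d(x,z) < r$ and $d(y,z) < r$.

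I would then apply the $C$ local comparability condition of Definition \ref{loccomp} twice, using $z$ as a common intermediate center. Since $d(x,z) < r$, the condition yields $\mu(B(x,r)) \le C_\mu\, \mu(B(z,r))$; since $d(z,y) < r$, it yields $\mu(B(z,r)) \le C_\mu\, \mu(B(y,r))$. Composing these two estimates immediately produces $\mu(B(x,r)) \le C_\mu^2\, \mu(B(y,r))$, which is exactly the claim.

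The argument has no real obstacle; the only point to watch is the orientation of the inequality in Definition \ref{loccomp}, ensuring that each application places the ``source'' center first and the intermediate center $z$ second. The appearance of $C_\mu^2$ rather than $C_\mu$ is precisely the price of passing through $z$: two intersecting balls of radius $r$ may have centers as far apart as (but strictly less than) $2r$, so a single application of local comparability, which only controls centers within distance $r$ of each other, does not suffice. This is why the lemma is stated as an intermediate tool ``in order to use uncentered blossoms,'' where exactly this situation of intersecting equal-radius balls arises.
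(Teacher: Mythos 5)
Your argument is correct and is exactly the paper's proof: pick $z\in B(x,r)\cap B(y,r)$, note $d(x,z)<r$ and $d(z,y)<r$, and chain two applications of local comparability through $z$ to get $\mu(B(x,r))\le C_\mu\,\mu(B(z,r))\le C_\mu^2\,\mu(B(y,r))$. No differences worth noting.
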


\begin{proof} Let $z\in B(x,r)\cap B(y,r)$. Since $d(x,z)< r$ and $d(z,y) < r$, we have that
$$
\mu(B(x,r))  \le C_\mu \  \mu(B(z,r)) \le C_\mu^2 \ \mu(B(y,r)).
$$ 
\end{proof}

The local comparability condition can be equivalently stated in terms of closed balls.

\begin{lemma} \label{closed} A measure $\mu$ satisfies a  $C$ local comparability condition
if and only if 
 for every $r>0$ and all pairs of points $x,y\in X$,
whenever $d(x,y) \le r$, we have 
$$\mu(B^{cl}(x,r))\le C \mu(B^{cl}(y,r)).$$
\end{lemma}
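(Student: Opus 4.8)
The plan is to prove both implications by approximating each kind of ball by the other and passing the comparability inequality through a limit, using continuity of $\mu$ along monotone sequences of radii. The two elementary set identities I would rely on are
\begin{equation*}
B^{cl}(x,r) = \bigcap_{s > r} B(x,s) \qquad \text{and} \qquad B(x,r) = \bigcup_{s < r} B^{cl}(x,s),
\end{equation*}
both immediate from the definitions. Taking a sequence $s_n \downarrow r$ in the first identity, and using that $\mu(B(x,s_1)) < \infty$ (true in any metric measure space), continuity of $\mu$ from above gives $\mu(B^{cl}(x,r)) = \lim_n \mu(B(x,s_n))$. Taking $s_n \uparrow r$ in the second identity, continuity from below (which needs only countable additivity) gives $\mu(B(x,r)) = \lim_n \mu(B^{cl}(x,s_n))$.

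For the forward implication I would assume open-ball $C$ local comparability and fix $x,y$ with $d(x,y) \le r$. For every $s > r$ one has $d(x,y) < s$, so the hypothesis applies at radius $s$ and yields $\mu(B(x,s)) \le C\,\mu(B(y,s))$. Letting $s \downarrow r$ and invoking continuity from above on both sides produces $\mu(B^{cl}(x,r)) \le C\,\mu(B^{cl}(y,r))$, with the very same constant $C$.

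For the converse I would assume the closed-ball inequality and fix $x,y$ with $d(x,y) < r$. For every $s$ with $d(x,y) \le s < r$ the closed-ball hypothesis applies and gives $\mu(B^{cl}(x,s)) \le C\,\mu(B^{cl}(y,s))$. Letting $s \uparrow r$ (legitimate precisely because $d(x,y) < r$) and using continuity from below on both sides yields $\mu(B(x,r)) \le C\,\mu(B(y,r))$.

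The argument is routine, and there is no genuine obstacle; the only point requiring care is lining up the strict and non-strict inequalities so that the hypothesis is available at every approximating radius. Open balls must be approached from radii strictly below $r$, where $d(x,y) \le s$ holds for $s$ near $r$, while closed balls must be approached from radii strictly above $r$, where $d(x,y) < s$ holds automatically. Keeping $C$ fixed throughout is what makes the passage to the limit preserve the comparability constant, so in particular $C(\mu)$ is the same whether computed with open or closed balls.
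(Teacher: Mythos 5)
Your proof is correct and follows essentially the same route as the paper's: the paper approximates $B^{cl}(x,r)$ from outside by the open balls $B(x,r+n^{-1})$ and $B(x,r)$ from inside by the closed balls $B^{cl}(x,r-n^{-1})$, passing the inequality with the fixed constant $C$ through the limit exactly as you do with $s\downarrow r$ and $s\uparrow r$. Your version is slightly more explicit about the set identities and the finiteness needed for continuity from above, but the argument is the same.
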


\begin{proof} Suppose  $\mu$ satisfies a  $C$ local comparability condition,
and let $d(x,y) \le r$. Then for every $n\ge 1$, $\mu(B(x,r + n^{-1}))\le C \mu(B(y,r+ n^{-1})).$
Taking the limit as $n\to\infty$ we obtain $\mu(B^{cl}(x,r))\le C \mu(B^{cl}(y,r)).$ 
For the other direction, suppose that $0 < d(x,y) < r$, select $N>>1$ such that 
$d(x,y) \le r - N^{-1}$, and use $\mu(B^{cl}(x,r - n^{-1}))\le C \mu(B^{cl}(y,r-n^{-1}))$ 
whenever $n\ge N$. Letting $n\to\infty$ we obtain $\mu(B(x,r ))\le C \mu(B(y,r)).$
\end{proof}

It is easy to see  (and we prove it below) that in geometrically doubling spaces 
local comparability implies boundedness of blossoms. Thus, it is interesting to see
how close, or how far away, is local comparability from doubling. 

But before we
do so, we consider the behavior of averaging operators  when local comparability is missing, with special emphasis  in the case of the standard Gaussian measure.

\section{Averaging operators without local comparability}

As noted in \cite[p. 737]{NaTa}, if $\mu$ satisfies a  $C$ local comparability condition, then,
a simple application of Fubini's Theorem yields, for all averaging operators $A_r$, $r > 0$,
 the uniform bound $\|A_r\|_{L^1\to L^1} \le C$. 
We recall the argument: Suppose that for a fixed radius $s$, and all $x, y \in X$ with $0 < d(x,y) < s$,  we have
$\mu(B(x,s ))\le C \mu(B(y,s)).$ If $0\le f \in L^1(\mu)$, then
\begin{equation}\label{fubini}
\|A_s f\|_{L^1} 
=
\int_X\int_X  \frac{\mathbf{1}_{B(x,s)}(y) }{\mu B(x,s)} f (y) \  d\mu(y) \  d\mu(x)
\end{equation}
\begin{equation}\label{fubini2}
= 
\int_X  f (y) \int_X  \frac{\mathbf{1}_{B(y,s)}(x)}{\mu B(x,s)}  \  d\mu(x) \  d\mu(y)
\le
C \int_X  f (y)  \  d\mu(y),
\end{equation}
so $\|A_s\|_{L^1\to L^1} \le C$, and hence local comparability for $s$ entails the bound $C$ for the corresponding
averaging operator, while local comparability (for all radii) entails uniform bounds for all the averaging operators.

\begin{example}\label{broom}  The  bound $\|A_s\|_{L^p\to L^p} \le C$, for a specific radius $s$,
may fail for all $p\in [1, \infty)$, if $\mu$ lacks local comparability for balls of radius $s$.
We define next a   path connected subset   $B\subset \mathbb{R}^2$, on which we use  the path metric  $d$
instead of the ambient space metric: The distance between
two points is the length of the shortest path joining them. 
Start with the positive $x$-axis. For each $n\ge 1$, select $n$ points from the circumference of radius 1 centered at
$(3 n, 0)$: $z_{n,1}\dots, z_{n,n}\in \{(x,y) \in\mathbb{R}^2| (x - 3n)^2 + y^2 = 1\}$. Join the points 
 $z_{n,1}\dots, z_{n,n}$ to the center $(3 n, 0)$ using straight line segments (radii), let $B$ be the union of 
 the positive $x$-axis with all the ``spikes" attached to the centers $(3 n, 0)$, and let $\mu$ be the counting
measure on the points $(3 n, 0)$ and  $z_{n,1}\dots, z_{n,n}$, for every $n\ge 1$
(clearly, $B$ is separable, and all balls have finite measure). Now $d(z_{n,k}, (3 n, 0)) = 1$, while
if $m\ne n$,  $d(z_{n,k}, z_{m,j}) \ge 5$. Thus,  $\mu B(z_{n,k}, 3/2) = 2$ and $\mu B((3 n, 0), 3/2) = n + 1$,
so local comparability fails for $s = 3/2$. Let $f_n = \mathbf{1}_{\{(3 n, 0)\}}$, and  fix $n, p \ge 1$ with $n >> 
2^p$.
Then  $(A_{3/2} \mathbf{1}_{\{(3 n, 0) \}}) (z_{n, k}) = 1/2$, from whence it follows that
 $\|A_{3/2}\|_{L^p\to L^p}^p \ge n 2^{- p}$. For the same reason (or by interpolation) $A_{3/2}$ satisfies  no  weak (p,p) type bounds. 
 \end{example}

It is nevertheless possible to have uniform $L^p$  bounds for $A_r$  without local comparability. For
instance, in $\mathbb{R}^d$, because of the Besicovitch covering theorem, the 
centered maximal function defined by an arbritraty measure is of weak type (1,1),
and by interpolation, bounded on $L^p$ for all $1 < p <\infty$ (with bounds that
grow exponentially with the dimension).
Thus,  averaging operators satisfy $L^p$ bounds independent of $r$, for $p > 1$. 
 We shall see
later that exponential growth with $d$ can actually happen for $A_s$ with suitably
chosen $s$.

Regarding $L^1$ bounds,   it may happen, even in the absence of local comparability, 
  that the term
$$
 \int_X  \frac{\mathbf{1}_{B(y,s)}(x)}{\mu B(x,s)}  \  d\mu(x)
$$
 in the
left hand side of (\ref{fubini2})
 can still be controled, 
 if the ratio $ \frac{\mu B(y,s)}{\mu B(x,s)} $ becomes
large on sets of sufficiently small measure. 
Next we consider the standard exponential distribution
in one dimension, and the standard gaussian measure in all dimensions. By way of comparison
with the results that follow,
 recall that for Lebesgue measure in $\mathbb{R}^d$, for every 
 $r > 0$ and  $d \ge 1$, $ \|A_r\|_{L^1\to L^1} = 1$ (using (\ref{fubini})-(\ref{fubini2})), 
$ \|A_r\|_{L^{\infty}\to L^{\infty}} \le 1$ (this is obvious),
and  $\|A_r\|_{L^p\to L^p} \le 1$ for $1 < p < \infty$, by interpolation or by Theorem \ref{rieszbds}.

\begin{theorem}  \label{exponential} Consider $\Omega = (0,\infty)$ with the standard exponential
distribution, given by $d P(t) = e^{-t} dt$. Then $P$ satisfies a local comparability condition for
each radius $r$, with optimal  $C(r) \in [e^r, \max\{2, e^r\}]$; thus, it fails to satisfy a local comparability condition. 
However, the averaging operators are uniformly bounded, with 
$1.27 < \|A_1\|_{L^1\to L^1} \le \sup_{r>0} \|A_r\|_{L^1\to L^1} \le 2$.
 \end{theorem}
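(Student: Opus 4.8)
The plan is to reduce everything to explicit one-dimensional integrals, exploiting that on $\Omega=(0,\infty)$ the balls are the intervals $B(x,r)=(x-r,x+r)\cap(0,\infty)$. First I would record the mass function $\varphi_r(x):=P(B(x,r))$: it equals $1-e^{-(x+r)}$ when $0<x\le r$ (the ball is truncated at the origin) and $e^{-x}(e^{r}-e^{-r})$ when $x\ge r$. Thus $\varphi_r$ increases on $(0,r]$ and decreases on $[r,\infty)$, with maximum $1-e^{-2r}$ at $x=r$; and, crucially for the $L^1$ estimate, the quotient $e^{-x}/\varphi_r(x)$ is the \emph{constant} $1/(e^{r}-e^{-r})=1/(2\sinh r)$ on the whole tail $\{x\ge r\}$.

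For the comparability constant $C(r)$ I would bound the ratio $\varphi_r(x)/\varphi_r(y)$ over all pairs with $|x-y|<r$. The lower bound $C(r)\ge e^{r}$ comes from taking both points in the tail: for $y=x+t$ with $x\ge r$ and $t\uparrow r$ one has $\varphi_r(x)/\varphi_r(y)=e^{t}\uparrow e^{r}$. For the upper bound I would split into cases according to which of the two balls is truncated and use the unimodality of $\varphi_r$; in each case the ratio is controlled either by the tail decay (giving at most $e^{r}$) or by the truncation at $0$ (giving at most $1+e^{-r}<2$), so $C(r)\le\max\{2,e^{r}\}$. Since $C(r)\ge e^{r}\to\infty$, no single constant works for all radii, which is the asserted failure of (global) local comparability in the sense of Definition~\ref{loccomp}.

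For the averaging bounds I would use the duality already exploited in the Fubini computation (\ref{fubini})--(\ref{fubini2}): for the positive operator $A_s$ one has $\|A_s\|_{L^1\to L^1}=\sup_y N(y)$, where
\[
N(y):=\int_{B(y,s)}\frac{dP(x)}{\varphi_s(x)}=\int_{B(y,s)}\frac{e^{-x}}{\varphi_s(x)}\,dx .
\]
To get the upper bound I would split $B(y,s)$ into its part in $(0,s)$ and its part in $[s,\infty)$. On the tail the integrand is the constant $1/(2\sinh s)$ and $B(y,s)$ has length at most $2s$, so that piece is at most $s/\sinh s\le 1$ (using $\sinh s\ge s$); the piece over $(0,s)$ is at most $\int_0^s e^{-x}/(1-e^{-(x+s)})\,dx=e^{s}\ln(1+e^{-s})\le e^{s}\cdot e^{-s}=1$, using $\ln(1+t)\le t$. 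Adding the two pieces gives $N(y)\le 2$ for every $y$, hence $\sup_{r>0}\|A_r\|_{L^1\to L^1}\le 2$.

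Finally, for the lower bound it suffices to evaluate $N$ at a single point, since testing $A_1$ on normalized indicators of shrinking neighborhoods of any $y_0$ yields $\|A_1\|_{L^1\to L^1}\ge N(y_0)$. Choosing $y_0=1=s$ makes $B(1,1)=(0,2)$ and gives the closed form $N(1)=e\ln(1+e^{-1})+(e-e^{-1})^{-1}$, which a direct numerical estimate shows exceeds $1.27$. The only genuinely delicate points are the identity $\|A_s\|_{L^1\to L^1}=\sup_y N(y)$ and the observation that $e^{-x}/\varphi_s(x)$ is constant on the tail; once these are in hand, the $L^1$ bounds follow from the two elementary inequalities $\ln(1+t)\le t$ and $\sinh s\ge s$ together with one explicit integration, and the comparability estimates reduce to routine case analysis.
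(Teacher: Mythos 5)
Your proposal is correct and takes essentially the same route as the paper: the same three-case analysis for the constant $C(r)$, the same Fubini/duality reduction with the ball split at the point $s$ (your inequality $s/\sinh s\le 1$ is exactly the paper's convexity bound $P(B(w,r))\ge 2re^{-w}$), and the same lower-bound witness, since testing on shrinking normalized indicators centered at $1$ is precisely the paper's Dirac delta $\delta_1$, and your closed form $e\ln(1+e^{-1})+(e-e^{-1})^{-1}$ coincides with the paper's $e\log\bigl((e-e^{-1})/(1-e^{-1})\bigr)-e+(e-e^{-1})^{-1}$. The only organizational difference is that you prove the uniform pointwise bound $N(y)\le 2$ (evaluating the near-origin piece exactly and using $\ln(1+t)\le t$), whereas the paper splits the outer integral at $y=2r$ and bounds the inner integral by $2$ for $y<2r$ and by $1$ for $y\ge 2r$; the substance is identical.
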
 

\begin{proof} For convenience we extend $P$ to $\mathbb{R}$ by setting
$P(-\infty,0] = 0$.  Fix $r >0$.  First we check that for every $x,y > 0$ such that $|x - y| \le r$, 
$ P((x-r, x+ r)) \le  \max\{2, e^r\} P((y-r, y+ r))$, and the bound $ \max\{2, e^r\}$ cannot be lowered.
We may assume, without loss of generality, that $x < y$, which leads to the consideration
of the following three cases: $x \ge r$,  $x < r \le y$, and  $y < r$.
In the first case, $ P((x-r, x+ r)) \ge P((y-r, y+ r))$, and a computation shows that 
$ P((x-r, x+ r)) / P((y-r, y+ r)) \le e^r$,  with equality when $y = x + r$. 
In the third case, $ P((0, x+ r)) \le P((0, y+ r))$, and $ P( (0, y+ r)) / P((0, x+ r)) \le  (1 - e^{-2r})/(1 - e^{-r}) \le 2$,
since for any decreasing function $h\ge 0$, we have $\int_0^{2 r} h \le 2 \int_0^{r} h$, and furthermore
 $\lim_{r\to 0}  (1 - e^{-2r})/(1 - e^{-r}) =2$, so we get arbitrarily close to 2 by letting $x\to 0, y\to r$, 
 and then $r\to 0$. 
Finally, when  $x < r \le y$,  we have $ e^{-r} = (1 - e^{-2r})/(e^r - e^{-r}) \le  P( (y-r, y+ r)) / P((x-r, x+ r)) \le  (1 - e^{-2r})/(1 - e^{-r}) \le 2$.

Next, note that for every $w\in (0,\infty)$ and every $r >0$,
$P(B(w, r)) =\int_{\max\{0, w -r\}}^{w + r} e^{-t} dt$, so if $w\ge r$, by the convexity of $e^{-t}$ we have
\begin{equation}\label{convex}
P(B(w, r)) \ge e^{-w } 2r,
\end{equation} 
 while if  $w< r$, then $P(B(w, r)) =\int_{0}^{w + r} e^{-t} dt = 1 -  e^{-r -w} \ge 1 -  e^{-r} $.
In order to bound 
\begin{equation}
\int_{0}^{\infty}  f (y) \int_{0}^{\infty} \frac{\mathbf{1}_{B(y,r)}(x)}{P( B(x,r))}  \  d P(x) \  d P(y),
\end{equation} 
we break up the outer integral into $\int_{0}^{\infty} = \int_{0}^{2 r} + \int_{2r}^{\infty}.$
On the region where $y \ge 2r$, since $|x - y| \le r$,  both $B(x,r) = (x - r, x + r)$ and $B(y,r) = (y - r, y + r)$ 
so 
$$
 \int_{0}^{\infty}  \frac{\mathbf{1}_{B(y,r)}(x)}{P( B(x,r))}  \  d P(x)
=
\int_{y-r}^{y + r}  \frac{e^{-x}}{P((x -r, x + r))}  \  d x
\le
\int_{y-r}^{y + r}  \frac{dx }{2 r}  \  d x
= 1.
$$
If $0 < y < 2r$,  
$$
 \int_{0}^{\infty}  \frac{\mathbf{1}_{B(y,r)}(x)}{P( B(x,r))}  \  d P(x)
=
\int_{\max\{0, y -r\}}^{y + r}  \frac{e^{-x}}{P((x -r, x + r))}  \  d x
=
\int_{\max\{0, y -r\}}^{ r}  + \int_{r}^{y + r} .
$$
Now
$$
\int_{\max\{0, y -r\}}^{ r}  \frac{e^{-x}}{P((x -r, x + r))}  \  d x 
\le
\int_{\max\{0, y -r\}}^{ r}  \frac{e^{-x}}{1 - e^{-r}}  \  d x 
=
 \frac{e^{- \max\{0, y -r\}} - e^{-r}}{1 - e^{-r}} \le  \frac{1 - e^{-r}}{1 - e^{-r}} = 1,
$$
while
 $$
\int_{r}^{y + r}  \frac{e^{-x}}{P((x -r, x + r))}  \  d x
\le
\int_{r}^{y + r}  \frac{dx }{2 r}  \  d x
\le 1,
$$
by (\ref{convex}).
Hence,  
\begin{equation}
\int_{0}^{\infty}  f (y) \int_{0}^{\infty} \frac{\mathbf{1}_{B(y,r)}(x)}{P( B(x,r))}  \  d P(x) \  d P(y)
=
2 
\int_{0}^{2r}  f (y)  \  d P(y) +  \int_{2r }^{\infty}  f (y)  \  d P(y) \le 2\|f\|_1.
\end{equation} 
Next, take $r=1$. We show  that $1.27 < \|A_1\|_{L^1\to L^1}$.
Recall that
 when the centered maximal operator acts on a measure $\nu$, it is defined via 
$$
M_{\mu} \nu (x) := \sup _{\{r > 0: \mu (B(x, r)) > 0\}} \frac{\nu (B(x, r))}{\mu (B(x, r))}.
$$
By a standard approximation argument, instead of a function, we consider
a Dirac delta placed at 1. 
 If $x\in(0,1)$, then
$$
A_1\delta_1(x) = \frac{1}{1 - e^{-1-x}},
$$
while if 
 $x\in [1, 2)$, then
$$
A_1\delta_1(x) = \frac{e^x}{e - e^{-1}}.
$$
Using the change of variables $u=e^x$ and  integrating explicitly, we obtain
$$
\|A_1\|_{L^1(P)\to L^1(P)} \ge \|A_1 \delta_1\|_{L^1(P)} = \int_0^2 A_1\delta_1(x) \ e^{-x} \ dx =
\int_0^1 \frac{dx}{e^x - e^{-1}} + \int_1^2 \frac{dx}{e - e^{-1}} 
$$
$$
=  e \log\left( \frac{e -  e^{-1}}{1 - e^{-1}}\right) - e + \frac{1}{e - e^{-1}} > 1.27.
$$
\end{proof}

Next we consider the case of the standard Gaussian measure $\gamma_d$
in $\mathbb{R}^d$. Here local comparability fails for every single radius $r >0$,
but nevertheless, the averaging operators $A_r$ defined by $\gamma_d$ satisfy 
uniform $L^1$-bounds (in $r$). 
We use $\|x\|_2 :=\left( x_1^2+ x_2^2+\dots + x_d^2\right)^{1/2}$ to denote the euclidean distance
in $\mathbb{R}^d$. Recall that the standard gaussian measure  is given by
$$
d \gamma^d (x) = \frac{e^{ - \frac{\|x\|_2 ^2}{2}}}{(2\pi )^{d/2}} \  dx.
$$

\begin{theorem}  \label{gaussian} Let $(\Omega, d, P)$ be $(\mathbb{R}^d, \|x\|_2,
\gamma^d)$, where $\gamma^d$ is the 
 standard gaussian measure. Given any $r > 0$,  $\gamma^d$ does not 
 satisfy a local comparability condition for $r$. However, for all $p \ge 1$, 
$$
\sup_{r>0}\|A_r\|_{L^p\to L^p} \le \left( 2^{d -1}\sqrt{2 \pi d}  + 
 \sqrt{\pi (d + 1)}\left(  \frac{2}{\sqrt3}\right)^{d + 1}\right)^{1/p}.
$$
Thus, for  every $\varepsilon > 0$  and every $d$ large enough, 
$$
\sup_{r>0}\|A_r\|_{L^p\to L^p} \le \left( 2+ \varepsilon \right)^{d/p}.
$$
Furthermore,
for  every $p$ in $[1,\infty)$ and every $d$ sufficiently large, the weak type $(p,p)$
constants satisfy $\left\|A_{\frac{\sqrt{3d - 3}}{2}} \right\|_{L^p\to L^{p, \infty}} >1.019^{d/p}$. 
 \end{theorem}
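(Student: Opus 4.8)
The plan is to establish the three assertions in turn, reducing the quantitative $L^p$ bound to the case $p=1$ via a Fubini identity, and obtaining the failure of local comparability and the weak-type lower bound from explicit configurations of balls.

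For the upper bound I would first apply Theorem \ref{rieszbds}, which gives $\|A_r\|_{L^p\to L^p}\le\|A_r\|_{L^1\to L^1}^{1/p}$; it therefore suffices to show that the quantity inside the large parentheses bounds $\|A_r\|_{L^1\to L^1}$ uniformly in $r$. By the computation in (\ref{fubini})--(\ref{fubini2}), for $f\ge 0$ one has $\|A_rf\|_{L^1}=\int_X f(y)\,\Phi_r(y)\,d\gamma^d(y)$ with $\Phi_r(y):=\int_{B(y,r)}\gamma^d(B(x,r))^{-1}\,d\gamma^d(x)$, so $\|A_r\|_{L^1\to L^1}$ is the essential supremum over $y$ of $\Phi_r(y)$ and everything reduces to a uniform estimate of $\Phi_r$. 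The key tool is the pointwise lower bound $\gamma^d(B(x,r))\ge e^{-\|x\|_2^2/2}\,\gamma^d(B(0,r))$, proved by translating the ball, expanding $\|x+w\|_2^2=\|x\|_2^2+2\langle x,w\rangle+\|w\|_2^2$, and symmetrizing in $w\mapsto -w$ so that $e^{-\langle x,w\rangle}$ becomes $\cosh\langle x,w\rangle\ge 1$.

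Because $e^{\|x\|_2^2/2}\,d\gamma^d(x)=(2\pi)^{-d/2}\,dx$ is a multiple of Lebesgue measure, this lower bound gives, uniformly in $y$, $\Phi_r(y)\le \omega_d r^d\,(2\pi)^{-d/2}\gamma^d(B(0,r))^{-1}=:g_d(r)$, where $\omega_d$ is the volume of the unit ball; one checks that $g_d$ is increasing in $r$, and it controls $\Phi_r$ efficiently while $r$ is not too large. The $\cosh$ bound is too lossy for large $r$ (indeed $g_d(r)\to\infty$), since it recenters the comparison ball at the origin and discards the mass that $B(x,r)$ gathers on its origin-facing side; for large $r$ I would instead split the $x$-integral and bound $\gamma^d(B(x,r))$ below by the mass of a ball internally tangent to $B(x,r)$ at its point nearest the origin, which yields $\gamma^d(B(x,r))\ge c\,e^{-(\|x\|_2-r)^2/2}$ times a volume factor. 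Carrying out both estimates and optimizing the split and the radius of the tangent ball produces the two displayed terms, the dominant $2^{d-1}\sqrt{2\pi d}$ and the lower-order $\sqrt{\pi(d+1)}\,(2/\sqrt3)^{d+1}$; the asymptotic $(2+\varepsilon)^{d/p}$ then follows since $2>2/\sqrt3$ forces the first term to dominate, so that the polynomial prefactor and the factor $2^{-1/p}$ are absorbed into the exponential for $d$ large. I expect this large-$r$ estimate to be the main obstacle, as it is exactly where the clean $\cosh$ argument fails and a genuine Laplace/concentration estimate for Gaussian balls is needed.

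The failure of local comparability for each fixed $r$ is elementary. With $e_1$ the first coordinate vector, set $x_t=t e_1$ and $y_t=(t+r/2)e_1$, so that $d(x_t,y_t)=r/2<r$. Bounding $\gamma^d(B(x_t,r))$ below by the tangent-ball estimate above (nearest distance to the origin $t-r$) and $\gamma^d(B(y_t,r))$ above by the maximal density times the volume, namely $(2\pi)^{-d/2}\omega_d r^d\,e^{-(t-r/2)^2/2}$, gives $\gamma^d(B(x_t,r))/\gamma^d(B(y_t,r))\ge c'\,e^{(r/2)(t-3r/4)}\to\infty$ as $t\to\infty$. Hence no finite constant works for the radius $r$, and in particular $\gamma^d$ satisfies no local comparability condition.

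For the weak-type lower bound I would use the Criado--Sj\"ogren testing scheme: for any measurable $E$ and any $\alpha>0$, applying the weak type $(p,p)$ inequality to $\mathbf 1_E$ gives $\|A_r\|_{L^p\to L^{p,\infty}}\ge \alpha\,\big(\gamma^d(\{A_r\mathbf 1_E\ge\alpha\})/\gamma^d(E)\big)^{1/p}$. Since $A_r\mathbf 1_E\le 1$, the goal is to choose $E$ and a level $\alpha$ that is only polynomially small, for which the super-level set is exponentially more massive than $E$; then the exponential rate of $\gamma^d(\{A_r\mathbf 1_E\ge\alpha\})/\gamma^d(E)$ survives the $1/p$-th power and the polynomial factor. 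The value $1.019$ is not accidental: writing $r=\frac{\sqrt{3(d-1)}}{2}$ one has $r^2/d\to 3/4$, and $\log 1.019$ matches the lower-tail large-deviation rate $\frac12(\lambda^2-1-\log\lambda^2)$ of $\|Z\|_2^2\sim\chi^2_d$ at $\lambda^2=3/4$, which is precisely the exponential decay rate of $\gamma^d(B(0,r))$. This both fixes $E$ to be a set whose Gaussian measure decays at that rate and explains why $r=\frac{\sqrt{3(d-1)}}{2}$ is the optimal choice. The delicate point, and the second main obstacle, is the simultaneous sharp control of $\alpha$ and of the two Gaussian measures in the ratio, which requires precise chi-square lower-tail asymptotics in place of the crude pointwise density bounds used elsewhere.
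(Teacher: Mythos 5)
Your reduction to $p=1$ via Theorem \ref{rieszbds} and the Fubini identity coincides with the paper's first step, and parts of your plan are sound. The symmetrization inequality $\gamma^d(B(x,r))\ge e^{-\|x\|_2^2/2}\,\gamma^d(B(0,r))$ (via $\cosh\ge 1$) is correct and genuinely elegant; the paper has no analogue of it (its Lemma \ref{secondopx} proves a bound of the same shape but with $\gamma^d(B(0,r))$ replaced by a lens-volume expression, valid only for $\|x\|_2\ge r$), and it does give the rigorous estimate $\|A_r\|_{L^1\to L^1}\le (2\pi)^{-d/2}\lambda^d(B(0,r))/\gamma^d(B(0,r))\le e^{r^2/2}$, which suffices for all $r\lesssim\sqrt d$. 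Your local-comparability argument (a direct pair of centers at distance $r/2$) differs from the paper's (a chain of three intersecting balls plus two applications of Lemma \ref{c2}, which is exactly what lets the paper get by with the crude bounds ``minimal density times volume'' and ``maximal density times volume''); your variant works once the inscribed-ball estimate is stated correctly, as discussed below. Your weak-type paragraph follows the same route as the paper (both lean on the Criado--Sj\"ogren estimates), and you correctly identify the test set, the mechanism (polynomially small level, superlevel set of polynomial measure, test set of exponentially small measure), and the fact that $\log 1.019$ is the chi-square lower-tail rate $\frac{1}{2}\left(\frac{3}{4}-1-\log\frac{3}{4}\right)$.

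The genuine gap is in the large-$r$ regime of the $L^1$ bound, which you yourself flag as the main obstacle but then resolve by assertion. Two concrete problems. First, the estimate you propose there, $\gamma^d(B(x,r))\ge c\,e^{-(\|x\|_2-r)^2/2}$ times a volume factor, is false as stated: a ball of radius $\rho$ inscribed in $B(x,r)$ and tangent at the point nearest the origin reaches out to distance $\|x\|_2-r+2\rho$, so the best available exponent is $e^{-(\|x\|_2-r+2\rho)^2/2}$, while the volume factor $\rho^d$ pushes $\rho$ upward; the whole difficulty is this trade-off, and resolving it (optimizing $\rho$ and the split, with chi-square lower-tail bounds for the mass of the inscribed ball) is a substantive computation you have not carried out. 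Second, the claim that this optimization ``produces the two displayed terms'' is not credible: in the paper, $2^{d-1}\sqrt{2\pi d}$ comes from a covering argument (Lemma \ref{firstop}: $B(0,r)$ is covered by at most $2^{d-1}\sqrt{2\pi d}$ balls of radius $r$ centered on the sphere of radius $r$, via a spherical-cap estimate, then Corollary \ref{central}), and this handles precisely the region your two tools leave uncovered --- points $x$ with $\|x\|_2<r$, i.e.\ averaging balls containing the origin, where ``tangent at the point nearest the origin'' does not parse and where the cosh bound degenerates (its output $(2\pi)^{-d/2}\lambda^d(B(0,r))/\gamma^d(B(0,r))$ tends to infinity with $r$); the second term $\sqrt{\pi(d+1)}\,(2/\sqrt3)^{d+1}$ comes from a ball-intersection volume computation (Lemma \ref{secondopx}). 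Your scheme, if completed (one can repair it by pulling the inscribed ball back toward, and past, the origin and applying your own cosh bound to it), would produce different expressions; it could recover the asymptotic $(2+\varepsilon)^{d/p}$ statement, but not the explicit inequality asserted for every $d$, and as written the hardest part of the theorem is claimed rather than proved.
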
 
 
 Of course, for $p > 1$ the fact that the operators $A_r$ are uniformly bounded in $L^p$
 follows from the corresponding bounds for the maximal operators. But  the boundedness of $A_r$ for $p=1$ in the preceding result is new, and
 for $p$ close to 1,  the  bounds obtained by interpolation  do not
 blow up when $p \to 1$, unlike the case of the maximal function inequalities.

The next obvious lemma tells us that the normalizing constants $(2\pi )^{-d/2}$ in the probabilities
can be omitted from certain formulas.

\begin{lemma} Given a measure $\mu$ and a constant $c > 0$, for all $r>0$, all $f\in L^1$, and all
$x\in X$, we have 
$A_{r,\mu} f (x) = A_{r,c \mu} f (x)$, and 
$\|A_{r,\mu} \|_{L^1(\mu) \to L^1 (\mu)} = \|A_{r,c \mu} \|_{L^1(c \mu) \to L^1 (c \mu)}.$
\end{lemma}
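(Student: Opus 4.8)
The plan is to verify both assertions by direct computation, the point being that the constant $c$ enters multiplicatively in both the numerator and the denominator of the averaging operator, and therefore cancels.

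For the pointwise identity I would unwind the definition (\ref{avop}) with $c\mu$ in place of $\mu$. Since $c > 0$, the measure of a ball scales linearly, $(c\mu)(B(x,r)) = c\,\mu(B(x,r))$, and likewise $\int_{B(x,r)} f\, d(c\mu) = c\int_{B(x,r)} f\, d\mu$; dividing, the two factors of $c$ cancel to give $A_{r,c\mu}f(x) = A_{r,\mu}f(x)$. One small bookkeeping point: this holds precisely where the average is defined, namely where $\mu(B(x,r)) > 0$, and this condition is unaffected by the scaling, since $(c\mu)(B(x,r)) > 0$ if and only if $\mu(B(x,r)) > 0$ for $c > 0$.

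For the operator norms I would first observe that $L^1(\mu)$ and $L^1(c\mu)$ are the same space of functions, as they have identical null sets, and that their norms satisfy $\|g\|_{L^1(c\mu)} = c\,\|g\|_{L^1(\mu)}$ for every $g$. Applying this both to $g = f$ and to $g = A_{r,\mu}f = A_{r,c\mu}f$ (equal by the first part), the defining ratio obeys
$$
\frac{\|A_{r,c\mu}f\|_{L^1(c\mu)}}{\|f\|_{L^1(c\mu)}} = \frac{c\,\|A_{r,\mu}f\|_{L^1(\mu)}}{c\,\|f\|_{L^1(\mu)}} = \frac{\|A_{r,\mu}f\|_{L^1(\mu)}}{\|f\|_{L^1(\mu)}},
$$
the factors of $c$ cancelling once more. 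Taking the supremum over all nonzero $f$, which ranges over the same collection of test functions for both spaces, yields equality of the two operator norms. There is essentially no obstacle here; the only care needed is the bookkeeping noted above, namely that the domain condition $\mu(B(x,r)) > 0$ and the ambient $L^1$ spaces are both invariant under scaling by a positive constant, so the suprema are genuinely taken over the same functions.
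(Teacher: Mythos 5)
Your proof is correct, and since the paper states this lemma without proof (calling it obvious), your direct computation---the constant $c$ cancelling between numerator and denominator of the average, and between $\|A_{r,c\mu}f\|_{L^1(c\mu)}$ and $\|f\|_{L^1(c\mu)}$ in the operator-norm ratio---is exactly the argument the author intends. Your bookkeeping remarks (the domain condition $\mu(B(x,r))>0$ and the underlying function space are invariant under scaling by $c>0$) are the right details to note, and nothing further is needed.
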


For the next lemma it will be more convenient to use closed balls. Of course, from the
viewpoint of the gaussian measure this makes no difference, since the boundaries of balls
have measure zero. Next we introduce some notation. We often omit the center of the 
sphere $\mathbb S^{d-1} (0, r)$ when it is the origin, and if it is the unit sphere ($r = 1$) we also
omit the radius. Given a unit vector $v\in \mathbb
R^{d}$ and $s \in [0, 1)$, the $s$ spherical
cap about $v$ is the set $C(s, v) :=\{\theta \in
\mathbb S^{d-1}: \langle \theta, v\rangle \ge s\}$.
Spherical caps are just geodesic balls $B^{cl}_{\mathbb
S^{d-1}}(x, r)$ in  $\mathbb S^{d-1}$.  Let  $e_1 =
(1,0,\dots , 0)$.  Given any angle $r\in (0, \pi/2)$,
writing $s = \cos r$, we have
\begin{equation}\label{cap1}
B^{cl}_{\mathbb S^{d-1}}(e_1, r) =
C(s, e_1).
\end{equation}
In particular,  $B^{cl}_{\mathbb S^{d-1}}(e_1, \pi/6) =
C(\sqrt3/2, e_1)$.

\begin{lemma} \label{firstop} Let $\mu$  be a rotationally invariant measure on $\mathbb{R}^d$, let $r>0$, 
and let $\|v\|_2= r$. Then $\mu (B^{cl}(0, r)) \le 2^{d -1}\sqrt{2 \pi d} \ \mu (B^{cl}(v, r))$.
\end{lemma}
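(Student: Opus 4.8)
The plan is to exploit rotational invariance through a polar decomposition, so that the comparison of the two balls reduces to a lower bound on the measure of a single spherical cap. Since $\mu$ is $O(d)$-invariant, I would first write $\mu=\int_0^\infty \mu_\rho\,d\nu(\rho)$, where $\nu$ is the radial distribution (the push-forward of $\mu$ under $x\mapsto\|x\|_2$) and $\mu_\rho$ is the image under $\theta\mapsto\rho\theta$ of the uniform (rotation invariant) probability measure $\sigma$ on $\mathbb S^{d-1}$; indeed the conditional measures on each sphere $\{\|x\|_2=\rho\}$ are rotation invariant, hence uniform. Writing $v=r v_0$ with $\|v_0\|_2=1$, a point $\rho\theta$ (with $\theta\in\mathbb S^{d-1}$, $\rho>0$) lies in $B^{cl}(v,r)$ exactly when $\rho^2-2\rho r\langle\theta,v_0\rangle+r^2\le r^2$, i.e.\ when $\langle\theta,v_0\rangle\ge \rho/(2r)$. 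Thus the sphere of radius $\rho$ meets $B^{cl}(v,r)$ in the spherical cap $C(\rho/(2r),v_0)$, while it lies entirely inside $B^{cl}(0,r)$ precisely when $\rho\le r$.

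Integrating against $d\nu$ then gives $\mu(B^{cl}(0,r))=\nu([0,r])$ and $\mu(B^{cl}(v,r))=\int_0^{2r}\sigma(C(\rho/(2r),v_0))\,d\nu(\rho)$ (an atom of $\mu$ at $0$, if present, only helps, since $0\in B^{cl}(v,r)$, so I may ignore it). Discarding the shells with $r<\rho\le 2r$ and using that $\sigma(C(s,v_0))$ is decreasing in $s$, so that $\sigma(C(\rho/(2r),v_0))\ge\sigma(C(1/2,v_0))$ for every $\rho\le r$, I obtain $\mu(B^{cl}(v,r))\ge\sigma(C(1/2,v_0))\,\nu([0,r])=\sigma(C(1/2,v_0))\,\mu(B^{cl}(0,r))$. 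Hence the lemma reduces to the single dimension-dependent estimate $\sigma(C(1/2,e_1))\ge (2^{d-1}\sqrt{2\pi d})^{-1}$, that is, a lower bound for the normalized measure of the cap of angular radius $\pi/3$ (recall (\ref{cap1})).

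For this I would pass to the one-dimensional marginal: projecting onto $\langle\cdot,e_1\rangle$ yields $\sigma(C(1/2,e_1))=\left(\int_0^{\pi/3}\sin^{d-2}\phi\,d\phi\right)\big/\left(\int_0^\pi\sin^{d-2}\phi\,d\phi\right)$. The concavity of $\sin$ on $[0,\pi]$ gives $\sin(\tfrac23\psi)\ge\tfrac23\sin\psi$, so after the substitution $\phi=\tfrac23\psi$ the numerator is at least $(\tfrac23)^{d-1}\int_0^{\pi/2}\sin^{d-2}\psi\,d\psi$, whereas the denominator equals $2\int_0^{\pi/2}\sin^{d-2}\psi\,d\psi$; this already yields the clean bound $\sigma(C(1/2,e_1))\ge\tfrac12(2/3)^{d-1}$, which is comfortably larger than $(2^{d-1}\sqrt{2\pi d})^{-1}$. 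Alternatively, expressing both integrals through the Beta function and combining the elementary inequality $1-t^2\ge 1-t$ on $[1/2,1]$ with a standard bound on $\Gamma(d/2)/\Gamma((d-1)/2)$ would recover the stated constant directly.

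The only genuinely delicate point is this last one: pinning down the exponential rate in the cap estimate. Everything preceding it is bookkeeping forced by rotational invariance, but the factor $2^{d-1}$ (together with the polynomial $\sqrt{2\pi d}$) reflects how fast the mass of a fixed-aperture cap decays as $d\to\infty$, so the argument stands or falls on producing an honest lower bound for $\int_0^{\pi/3}\sin^{d-2}\phi\,d\phi$ relative to the full integral. I expect this, rather than the geometric reduction, to be where the care is needed.
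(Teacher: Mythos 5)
Your proof is correct, and it takes a genuinely different route from the paper's. The paper argues by covering: it takes a maximal $1$-separated set $\{v_1,\dots,v_M\}$ of unit vectors, shows by maximality and convexity that $B^{cl}(0,1)\subset\cup_{i}B^{cl}(v_i,1)$, bounds $M\le 2^{d-1}\sqrt{2\pi d}$ by packing the disjoint caps of angular radius $\pi/6$ obtained as radial projections of the balls $B^{cl}(v_i,1/2)$ (quoting the cap estimate of \cite[Lemma 2.1]{AlPe}), and uses rotational invariance only to say that the rescaled covering balls $B^{cl}(rv_i,r)$ all have the same measure as $B^{cl}(v,r)$. You instead use invariance at full strength, disintegrating $\mu$ into uniform measures on spheres, so that the lemma reduces to the fact that every sphere of radius $\rho\le r$ meets $B^{cl}(v,r)$ in a cap of aperture at least $\pi/3$, whose normalized measure you bound from below by hand using concavity of the sine. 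Two remarks: first, the one step you assert without proof, the polar identity $\mu(A)=\int\sigma(\{\theta:\rho\theta\in A\})\,d\nu(\rho)$, is standard but deserves a line --- it follows from $\mu(A)=\int_{O(d)}\mu(gA)\,dg$ and Fubini, since for $x\ne 0$ the orbit average $\int_{O(d)}\mathbf{1}_A(g^{-1}x)\,dg$ equals $\sigma(\{\theta:\|x\|_2\,\theta\in A\})$; your handling of a possible atom at the origin is correct. Second, your argument proves strictly more: since $\sigma(C(1/2,e_1))\ge\tfrac12(2/3)^{d-1}$, you obtain $\mu(B^{cl}(0,r))\le 2(3/2)^{d-1}\mu(B^{cl}(v,r))$, an exponentially smaller constant than $2^{d-1}\sqrt{2\pi d}$ (your cap has aperture $\pi/3$ rather than the $\pi/6$ forced by the packing argument). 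This is not cosmetic: the constant of Lemma \ref{firstop} is the dominant term in Theorem \ref{gaussian}, so your version would improve the uniform $L^1$ bound for the Gaussian averaging operators from essentially $2^d$ to essentially $(3/2)^d$. What the paper's route buys in exchange is independence from the disintegration machinery, a self-contained geometric covering argument, and a packing bound of independent interest.
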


\begin{proof} First we recall a well known volumetric argument giving upper bounds on the size
of $r$-nets on the $r$-sphere. Since for this part we only use the normalized area 
$\sigma^{d-1}_N$
 on the sphere $\mathbb S^{d-1} (0, r)$,
we can take $r=1$. Let $\{v_1, \dots, v_M\}$ be a maximal set of unit vectors in 
$\mathbb{R}^d$,
subject to the condition that for $i\ne j$, $\|v_i - v_j\|_2 \ge 1$. The maximality of $\{v_1, \dots, v_M\}$  entails that
 $B^{cl}( 0, 1) \subset \cup_1^M B^{cl}(v_i, 1)$, as the following argument shows. Suppose $y\in B^{cl}( 0, 1)
\setminus \cup_1^M B^{cl}(v_i, 1)$. Then $y\ne 0$, because the origin belongs to all the balls under consideration,
so $v:= y/\|y\|_2$ lies on the unit sphere. Now if $v\in B^{cl}(v_i, 1)$  for some index $i$, then
 $y\in B^{cl}(v_i, 1)$ by the  convexity of the ball, so $v\in B^{cl}( 0, 1)
\setminus \cup_1^M B^{cl}(v_i, 1)$. But then  $\|v_i - v\|_2 > 1$ for $i= 1,\dots, M$, contradicting the maximality of  $\{v_1, \dots, v_M\}$.

 Next, note that  all balls $ B^{cl}(v_i, 1/2)$
have disjoint interiors, so their radial projections (from the origin) into the unit sphere 
$\mathbb S^{d-1}$ also have disjoint interiors. By rotational invariance we may assume that
$v_1 = e_1$. Now the tangent lines to $ B^{cl}(e_i, 1/2)$ starting from the origin, form
an angle of $\pi/6$ with $e_1$, since the radii of $ B^{cl}(e_i, 1/2)$ are perpendicular to these
tangent lines, and $\sin (\pi/6) = 1/2$.  Thus, the radial projection  of $ B^{cl}(e_i, 1/2)$
into $\mathbb S^{d-1}$ is the geodesic ball, or spherical cap, $B^{cl}_{\mathbb S^{d-1}}(e_1, \pi/6) =
C(\sqrt3/2, e_1)$. By \cite[Lemma 2.1]{AlPe}, 
\begin{equation}\label{caps}
\frac{ 1}{ 2^{d-1}\sqrt{2 \pi d}}
 \le
\sigma^{d-1}_N (C(\sqrt3/2, e_1)),
\end{equation}
so  $\sum_1^M \sigma^{d-1}_N (C(\sqrt3/2, v_i)) 
=
M \sigma^{d-1}_N (C(\sqrt3/2, e_1)) \le 1$, 
from which it follows that $M\le  2^{d-1}\sqrt{2 \pi d}$.

Let us now return to the original $\mu$ and  $r > 0$.
 By invariance under rotations, for all $i$ we have $\mu  B^{cl}(r v_i, r) = \mu  B^{cl}(r v_1, r)$. 
Since  $ B^{cl}(0, r) \subset \cup_1^M B^{cl}(r v_i, r)$, it follows that
$\mu  B^{cl}(0, r) \le \mu \cup_1^M B^{cl}(r v_i, r)  \le  \sum_1^M \mu B^{cl}(r v_i, r)  \le  2^{d-1}\sqrt{2 \pi d} \ \mu (B^{cl}(r v_1, r))$.
\end{proof}

\begin{corollary} \label{central} Let $\mu$  be a radial, radially decreasing measure  on $\mathbb{R}^d$. 
That is, $d \mu (x) = f(x) dx$, where
$f$ is a locally integrable, radial and radially decreasing function. Then for every $g\in L^1(\mu)$,
$\|\mathbf{1}_{B(0,r)} A_{r,\mu} g \|_{L^1(\mu)} \le   2^{d -1}\sqrt{2 \pi d} \  \|\mathbf{1}_{B(0, 2r)} g \|_{L^1(\mu)}$,
where $ A_{r,\mu}$ is defined using the closed balls  $ B^{cl}(x, r)$.
\end{corollary}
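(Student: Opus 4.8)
The plan is to reduce to $g\ge 0$ (since $|A_{r,\mu}g|\le A_{r,\mu}|g|$ pointwise, replacing $g$ by $|g|$ only enlarges the left-hand side), to expand the $L^1$-norm by Tonelli's theorem, and then to bound the resulting kernel uniformly. Writing out the definition of $A_{r,\mu}$ and interchanging the order of integration,
\[
\int_{B(0,r)} A_{r,\mu}g(x)\, d\mu(x)
= \int_{\mathbb{R}^d} g(y)\left[\int_{B(0,r)} \frac{\mathbf{1}_{B^{cl}(x,r)}(y)}{\mu(B^{cl}(x,r))}\, d\mu(x)\right] d\mu(y).
\]
Since $\mathbf{1}_{B^{cl}(x,r)}(y)=\mathbf{1}_{B^{cl}(y,r)}(x)$, the inner integral, which I will call $K(y)$, is taken over $x\in B(0,r)\cap B^{cl}(y,r)$. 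If $\|y\|_2\ge 2r$ this intersection is empty (any $x\in B(0,r)$ satisfies $\|x-y\|_2\ge \|y\|_2-\|x\|_2>r$), so $K$ is supported in $B(0,2r)$; this is what produces the factor $\mathbf{1}_{B(0,2r)}$ on the right.

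It then remains to show $K(y)\le 2^{d-1}\sqrt{2\pi d}$ for every $y$. The heart of the matter is a lower bound on the denominator $\mu(B^{cl}(x,r))$ that is uniform over $x\in B(0,r)$. Here I would use the monotonicity fact that, for a radial and radially decreasing density $f$, the function $x\mapsto \mu(B^{cl}(x,r))$ is itself radial and nonincreasing in $\|x\|_2$. Granting this, for any $x$ with $\|x\|_2<r$ and any $v$ with $\|v\|_2=r$ we get $\mu(B^{cl}(x,r))\ge \mu(B^{cl}(v,r))$, and Lemma \ref{firstop} gives $\mu(B^{cl}(v,r))\ge (2^{d-1}\sqrt{2\pi d})^{-1}\mu(B^{cl}(0,r))$. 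Hence $1/\mu(B^{cl}(x,r))\le 2^{d-1}\sqrt{2\pi d}\,/\mu(B^{cl}(0,r))$ throughout $B(0,r)$, and therefore
\[
K(y)\le \frac{2^{d-1}\sqrt{2\pi d}}{\mu(B^{cl}(0,r))}\,\mu\bigl(B(0,r)\cap B^{cl}(y,r)\bigr)\le \frac{2^{d-1}\sqrt{2\pi d}}{\mu(B^{cl}(0,r))}\,\mu(B(0,r))\le 2^{d-1}\sqrt{2\pi d},
\]
using $\mu(B(0,r))\le \mu(B^{cl}(0,r))$. Combining this uniform kernel bound with the first display and the support of $K$ yields the claimed inequality.

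The main obstacle is the monotonicity of $x\mapsto \mu(B^{cl}(x,r))$, which is the only step where radial decrease of $f$ is genuinely used. I would establish it by a reflection argument. By rotational invariance it suffices to compare two points $x_1,x_2$ lying on a ray from the origin with $0\le \|x_1\|_2\le \|x_2\|_2$; let $H$ be the perpendicular bisector hyperplane of the segment $[x_1,x_2]$ and let $\sigma$ be reflection across $H$, so that $\sigma(x_1)=x_2$ and $\sigma$ maps $B^{cl}(x_1,r)$ onto $B^{cl}(x_2,r)$ while preserving Lebesgue measure. After cancelling the common part $B^{cl}(x_1,r)\cap B^{cl}(x_2,r)$, the difference $\mu(B^{cl}(x_1,r))-\mu(B^{cl}(x_2,r))$ becomes the integral over $B^{cl}(x_1,r)\setminus B^{cl}(x_2,r)$ of $f(z)-f(\sigma z)$; every such $z$ satisfies $\|z-x_1\|_2<r\le\|z-x_2\|_2$, hence lies on the origin side of $H$, so $\|z\|_2\le \|\sigma z\|_2$ and $f(z)\ge f(\sigma z)$, making the difference nonnegative. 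Equivalently, one may observe that $\mu(B^{cl}(\cdot,r))=f*\mathbf{1}_{B^{cl}(0,r)}$ is a convolution of two nonnegative, radial, radially decreasing functions, hence radial and radially decreasing. Once this monotonicity is in hand, the remaining steps are the elementary bookkeeping carried out above.
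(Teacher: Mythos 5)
Your proof is correct and follows essentially the same route as the paper's: both arguments rest on the monotonicity of $x\mapsto\mu(B^{cl}(x,r))$ in $\|x\|_2$ (so that among balls of radius $r$ meeting the origin the smallest measure occurs for centers on the sphere of radius $r$) combined with Lemma \ref{firstop}, and your Tonelli/kernel bookkeeping is just a rearrangement of the paper's pointwise bound $A_{r,\mu}g(x)\le \|\mathbf{1}_{B(0,2r)}g\|_{L^1(\mu)}/\mu\bigl(B^{cl}(rx/\|x\|_2,r)\bigr)$ followed by integration over $B(0,r)$. A minor bonus: your reflection argument supplies a complete proof of the monotonicity step, which the paper only asserts via a brief comparison of $f$-values on the two symmetric-difference pieces.
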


\begin{proof} Note that of all balls  $ B^{cl}(x, r)$ containing the origin, the ones with smallest measure
are those for which $\|x\|_2 = r$, since for every $t\in [0,1)$, for all $z\in  B^{cl}(x, r) \setminus  B^{cl}(t x,  r)$,
and all  $y\in  B^{cl}(t x, r) \setminus  B^{cl}(x,  r)$, $f(z ) < f(y)$. 
Now if $x\in B(0,r) \setminus \{0\}$ and $0\le g \in L^1(\mu)$, then 
$$
 A_{r,\mu} g (x) \le \frac{ \|\mathbf{1}_{B(0, 2r)} g \|_{L^1(\mu)}}{\mu  B^{cl}(r x/\|x\|_2,  r)},
$$
so the result follows from the previous lemma.
\end{proof}

For the rest of this section, we use $\mu$ to denote the non-normalized gaussian 
$
d \mu (x) = e^{ - \frac{\|x\|_2 ^2}{2}}.$
Occasionally we will  write $\mu^d$ to specify the dimension $d$.

\begin{lemma} \label{secondopx} Let $r>0$ and let $x\in  \mathbb{R}^d  \setminus B(0, r)$. 
Then  
$$\mu   B (x, r) \ge   \frac{e^{ - \frac{\|x\|_2 ^2}{2}}  \lambda^d B(0, r)}{\sqrt{\pi (d + 1)}}  \left(  \frac{\sqrt3}{2}\right)^{d + 1}.$$
\end{lemma}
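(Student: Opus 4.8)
The plan is to integrate $e^{-\|y\|_2^2/2}$ only over the part of $B(x,r)$ where the density is at least its value $e^{-\|x\|_2^2/2}$ at the center. Set $a:=\|x\|_2$; since $x\notin B(0,r)$ we have $a\ge r$, and by the rotational invariance of $\mu$ we may take $x=ae_1$. On the region $S:=\{y\in B(x,r):\|y\|_2\le a\}$ the density satisfies $e^{-\|y\|_2^2/2}\ge e^{-a^2/2}$, so $\mu B(x,r)\ge e^{-a^2/2}\,\lambda^d(S)$, and it suffices to show $\lambda^d(S)\ge \frac{\lambda^d B(0,r)}{\sqrt{\pi(d+1)}}\left(\frac{\sqrt3}{2}\right)^{d+1}$.

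Next I would identify $S$ geometrically. Translating by $-x$, the region $S$ becomes the lens $B(0,r)\cap B^{cl}(-ae_1,a)$ carved from the $r$-ball by the closed $a$-ball whose boundary passes through the origin. Both $B^{cl}(-ae_1,a)$ and $B^{cl}(-re_1,r)$ are internally tangent to the hyperplane $\{z_1=0\}$ at the origin, and for $a\ge r$ the former contains the latter; hence $\lambda^d(S)\ge\lambda^d\big(B(0,r)\cap B^{cl}(-re_1,r)\big)=:\lambda^d(L)$. This disposes of the dependence on $a$ by reducing everything to the worst case $a=r$, i.e. to the symmetric lens $L$ of two radius-$r$ balls whose centers lie at distance $r$.

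Now $L$ is twice the cap $\{z\in B(0,r):z_1\le -r/2\}$ of height $r/2$. Slicing perpendicular to $e_1$ and substituting $t=r\sin\theta$ gives $\lambda^d(L)=2v_{d-1}r^d\int_{\pi/6}^{\pi/2}\cos^d\theta\,d\theta$, where $v_k$ denotes the volume of the unit ball in $\mathbb{R}^k$. Since $\lambda^d B(0,r)=v_d r^d$, the target inequality becomes the clean one-dimensional statement $\frac{2v_{d-1}}{v_d}\int_{\pi/6}^{\pi/2}\cos^d\theta\,d\theta\ge\frac{1}{\sqrt{\pi(d+1)}}\left(\frac{\sqrt3}{2}\right)^{d+1}$.

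Finally I would estimate the two factors. Since $\cos$ is concave on $[\pi/6,\pi/2]$, it lies above the chord through $(\pi/6,\sqrt3/2)$ and $(\pi/2,0)$, which after integrating yields $\int_{\pi/6}^{\pi/2}\cos^d\theta\,d\theta\ge\left(\frac{\sqrt3}{2}\right)^d\frac{\pi}{3(d+1)}$. Pairing this with the Gautschi-type bound $\frac{v_{d-1}}{v_d}=\pi^{-1/2}\,\frac{\Gamma(d/2+1)}{\Gamma((d+1)/2)}\ge\frac{\sqrt{2d+1}}{2\sqrt\pi}$ reduces the claim to $\frac{\sqrt{2d+1}}{\sqrt{d+1}}\ge\frac{3\sqrt3}{2\pi}$, which holds for every $d\ge1$. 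I expect the main obstacle to be exactly this last accounting: the exponential rates match automatically, since both sides are of order $(\sqrt3/2)^d$, so the real work is to verify that the polynomial factors $\tfrac{1}{d+1}$ together with the volume ratio $v_{d-1}/v_d\sim\sqrt{d/(2\pi)}$ conspire to beat $\frac{1}{\sqrt{\pi(d+1)}}$ uniformly in $d$ rather than only asymptotically. Collapsing the problem to the single integral $\int_{\pi/6}^{\pi/2}\cos^d\theta\,d\theta$ is precisely what keeps this step elementary.
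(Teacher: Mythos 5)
Your proof is correct and follows essentially the same route as the paper's: lower-bound the Gaussian measure by $e^{-\|x\|_2^2/2}$ times the Lebesgue measure of the lens $B(x,r)\cap B^{cl}(0,\|x\|_2)$, reduce to the tangent case $\|x\|_2=r$, and estimate the resulting spherical-cap integral against the unit-ball volume via a Gamma-ratio bound. The differences are only in the details, mostly to your advantage: your nested-balls containment rigorously justifies the worst-case reduction that the paper merely asserts (``the ratio is minimized when $s=r$''), and your chord bound and Gautschi-type inequality replace the paper's $\sin t\le 1$ trick and its log-convexity estimate $\Gamma(1+d/2)/\Gamma(1/2+d/2)\ge\sqrt{d/2}$, either of which would suffice to close your final inequality $\sqrt{2d+1}/\sqrt{d+1}\ge 3\sqrt3/(2\pi)$.
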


\begin{proof}  When needed, we will distinguish between balls in $\mathbb{R}^{d}$ and 
$\mathbb{R}^{d-1}$ by writing $B^d$ and $B^{d-1}$ respectively. 
Using rotational invariance, we may assume that $x = s e_1$, with
$s \ge r$. Since 
$$
\mu   B (s e_1, r) \ge \mu  \left( B (s e_1, r) \cap B (0, s)\right) \ge e^{-s^2/2}
\lambda^d \left( B (s e_1, r) \cap B (0, s)\right),
$$
all we need to do is to show that
\begin{equation} \label{cap}
\frac{\lambda^d \left( B (s e_1, r) \cap B (0, s)\right)}{\lambda^d \left(B (0, r)\right)} 
\ge   \frac{1}{\sqrt{\pi (d + 1)}}\left(  \frac{\sqrt3}{2}\right)^{d + 1}.
\end{equation}
First of all, note that the ratio in the left hand side of the preceding inequality is minimized
when $s = r$, so we suppose this is the case. Second, dividing all radii by $s$ and cancelling
the factors $s^d$, we may take
$s = 1$. Now 
\begin{equation*}
 \lambda^{d} (B(0,1) \cap B( e_1, 1)) = 2 \lambda^d (B^d  (0,1) \cap \{x_1 \ge
2^{-1}\}) 
\end{equation*}
\begin{equation*}
= 
2 \lambda^{d-1} (B^{d-1}(0,1))\int_{1/2}^1\left(\sqrt{1-x_1^2}\right)^{d-1}dx_1
\end{equation*}
\begin{equation*}
 \ge 
 2 \lambda^{d-1} (B^{d-1} (0,1)) \int_{\pi/6}^{\pi/2}\cos^d t \sin t dt 
 =   \frac{ 2}{ d + 1}\left( \frac{ \sqrt 3}{2}\right)^{d+1} \lambda^{d-1} (B^{d-1}(0,1)).
 \end{equation*}
 
Using $\lambda^d (B^{d}(0,1)) = 
\frac{\pi^{d/2}}{\Gamma (1 + d/2)} $, together with the following Gamma function
estimate (consequence of the log-convexity of
$\Gamma$ on $(0,\infty)$, cf. Exercise 5, pg. 216 of \cite{Web})
\begin{equation}\label{ratio}
\left(\frac{d}{2}\right)^{1/2}
\le
\frac{\Gamma (1 + d/2)}{\Gamma (1/2 + d/2)},
\end{equation}
we get
\begin{equation} \label{cap1}
\frac{\lambda^d \left( B (e_1, 1) \cap B (0, 1)\right)}{\lambda^d \left(B (0, 1)\right)} 
\ge  \frac{1}{\sqrt{\pi (d + 1)}}\left(  \frac{\sqrt3}{2}\right)^{d + 1}.
\end{equation}\end{proof}

{\em Proof of Theorem \ref{gaussian}.}   Fix $r >0$ and take $\|x\|_2 >>1 + r$.  To see that $\mu$ does not satisfy a local comparability condition for $r$, we consider the balls
$B(x,r)$ and $B( (1 + 3 r/\|x\|_2)  x,r)$. Since their centers are at distance $3r$, they are disjoint.
However, $B(x,r)$, $B( (1 + 3 r/(2\|x\|_2))  x,r)$ and $B( (1 + 3 r/\|x\|_2)  x,r)$ form an intersecting chain of balls of length 3, so applying Lemma \ref{c2} twice, local comparability for $r$ would imply
that the measures of $B(x,r)$ and $B( (1 + 3 r/\|x\|_2)  x,r)$ are comparable, for every $x$.
However, 
$$
\lim_{x\to\infty} \frac{\mu(B ((1 + 3 r/\|x\|_2) x,  r))}{\mu(B (x,  r))} \le 
\lim_{x\to\infty} \frac{e^{ - \frac{(\|x\|_2 + 2 r)^2}{2}}
\lambda^{d} B(0,r)}{\  e^{ - \frac{(\|x\|_2 +  r)^2}{2}} \lambda^{d} B(0,r)} = 0.
$$

In order to prove that $\sup_{r>0}\|A_r\|_{L^1\to L^1} \le (2 + \varepsilon )^d$ for $d$ large, we split $A_{r,\mu}$ into 
$A_{r,\mu} = \mathbf{1}_{B(0,r)} A_{r,\mu} +
\mathbf{1}_{B(0,r)^c} A_{r,\mu}$. Let $0 \le g \in L^1(\mu)$.  The bound 
$$
\|\mathbf{1}_{B(0,r)} A_{r,\mu} g \|_{L^1(\mu)} \le   2^{d -1}\sqrt{2 \pi d} \  \|\mathbf{1}_{B(0, 2r)} g \|_{L^1(\mu)}
$$
is a special case of Corollary \ref{central}, together with the fact that open and closed balls have
the same gaussian measure.
Regarding the second term,
\begin{equation}\label{secondop}
\|\mathbf{1}_{B(0,r)^c} A_{r,\mu} g \|_{L^1(\mu)} 
= 
\int_{\mathbb{R}^d}\int_{\mathbb{R}^d}\ 
 \frac{\mathbf{1}_{B(x, r)}(y) \mathbf{1}_{B(0,r)^c} (x) }{\mu B(x, r)}  g(y) \  d\mu(y) \  d\mu(x)
 \end{equation}
 \begin{equation}\label{secondop1}
= 
\int_{\mathbb{R}^d}   g (y) \int_{\mathbb{R}^d}  
\frac{\mathbf{1}_{B(y, r)}(x) \mathbf{1}_{B(0,r)^c} (x) }{\mu B(x, r) }\  d\mu(x) \  d\mu(y).
\end{equation}
Now fix $y$. By Lemma \ref{secondopx}
we have
$$
\int_{\mathbb{R}^d}  
\frac{\mathbf{1}_{B(y, r)}(x) \mathbf{1}_{B(0,r)^c} (x) }{\mu B(x, r) }\  d\mu(x) = 
\int_{B(0,r)^c \cap B(y, r)}  
\frac{ e^{ - \frac{\|x\|_2 ^2}{2}}  \  dx}{\mu B(x, r) }
$$
$$
\le
 \sqrt{\pi (d + 1)}\left(  \frac{2}{\sqrt3}\right)^{d + 1}\ \int_{B(y, r)}  
\frac{ dx}{\lambda^d B(0, r) } =  \sqrt{\pi (d + 1)}\left(  \frac{2}{\sqrt3}\right)^{d + 1}.
$$
Therefore
$$
\| A_{r,\mu} g \|_{L^1(\mu)} 
= 
\|\mathbf{1}_{B(0,r)} A_{r,\mu} g \|_{L^1(\mu)} 
+
\|\mathbf{1}_{B(0,r)^c} A_{r,\mu} g \|_{L^1(\mu)} 
$$
$$
\le \left( 2^{d -1}\sqrt{2 \pi d} \ +  \sqrt{\pi (d + 1)}\left(  \frac{2}{\sqrt3}\right)^{d + 1}\right) \|g\|_{L^1(\mu)},
$$
and from Theorem \ref{rieszbds} we conclude that for every $p \ge 1$, 
$$
\| A_{r,\mu} \|_{L^p(\mu) \to L^p(\mu)} 
\le \left( 2^{d -1}\sqrt{2 \pi d} \  +  \sqrt{\pi (d + 1)}\left(  \frac{2}{\sqrt3}\right)^{d + 1}\right)^{1/p}.
$$
Regarding the lower bounds for the weak type constants, the argument we use is the same
as in \cite{CriSjo}, together with gaussian concentration. The basic idea is that since the standard
gaussian measure in $\mathbb{R}^{d}$ behaves essentially as normalized area on the sphere 
$\mathbb{S}^{d-1}(\sqrt{d})$, centered at 0 and 
of radius $\sqrt{d}$, a single well chosen radius is enough to witness the exponential growth of
constants with the dimension. So the argument given by Criado and Sjogr\"en for the
maximal operator essentially yields the same result for certain averaging operators. 
Of course, since the standard gaussian measure is not singular, but absolutely continuous, 
one still has to show that small changes in the center of a ball lead to small
changes in the average.

\vskip .2 cm

First, we estimate from below the measure of the region bounded between 
$\mathbb{S}^{d-1}(\sqrt{d-1} - 1/\sqrt{d-1})$ and $\mathbb{S}^{d-1}(\sqrt{d - 1})$
(the region between the radii $\sqrt{d}$ and $\sqrt{d-1} $ could be used in an entirely analogous way, but since we
want to  cite estimates from \cite{CriSjo} directly, rather than to redo them, and they use $\sqrt{d-1} $
as the largest radius,
so do we). 
Denote by 
$$
\sigma_{d-1}(\mathbb{S}^{d-1})
:= 
\frac{d \  \pi^{d/2}}{\Gamma (1 + d/2)}
$$ 
the area of the
unit sphere $\mathbb{S}^{d-1}$.   In what follows
 we utilize Stirling's formula, as well as
approximations to $e$, so the inequality below holds for $d$ large enough.
Since $g(r) := r^{d-1} e^{-r^2/2}$ is increasing for
$0< r < \sqrt{d-1}$, 
\begin{equation}\label{area1}
\int_{B(0, \sqrt{d-1}) \setminus B(0, \sqrt{d-1} - 1/\sqrt{d-1}) } \  d  \gamma^d (x)
=
\frac{\sigma_{d-1}(\mathbb{S}^{d-1})}{(2 \pi )^{d/2}} 
\int_{\sqrt{d-1} - 1/\sqrt{d-1}}^{\sqrt{d-1}} g(r) dr
\end{equation}
\begin{equation}\label{area2}
\ge
\frac{d \  \pi^{d/2}}{(2 \pi )^{d/2} \Gamma (1 + d/2)} g(\sqrt{d-1} - 1/\sqrt{d-1}) 
\int_{\sqrt{d-1} - 1/\sqrt{d-1}}^{\sqrt{d-1}}  dr
> 
\frac{1}{(\pi  e^3 d)^{1/2}}
\end{equation}
for $d$ large.
We want to obtain a good estimate from below for 
$$A_{\frac{\sqrt{3d - 3}}{2}} \mathbf{1}_{B(0, \frac{\sqrt{3d - 3}}{2})}
$$
on the region $D:= B(0, \sqrt{d-1}) \setminus B(0, \sqrt{d-1} - 1/\sqrt{d-1}) $.
Let $1 \le p < \infty$, let $0 < r, R < 1$, and let $c  >0$ satisfy, for all $\alpha >0$, 
the inequality
\begin{equation}\label{weaktypepgauss}
\gamma^d (\{A_{R \sqrt{d-1}, \gamma^d} \mathbf{1}_{B(0, r \sqrt{d-1})}  \ge \alpha\}) 
\le \left(\frac{c \|\mathbf{1}_{B(0, r \sqrt{d-1})} \|_{L^p(\gamma^d )}}{\alpha}\right)^p,
\end{equation}
or equivalently
\begin{equation}\label{weaktypepgauss2}
\frac{\alpha  \gamma^d (\{A_{R \sqrt{d-1}, \gamma^d} \mathbf{1}_{B(0, r \sqrt{d-1})}  \ge \alpha\})^{1/p} }{\gamma^d (B(0, r \sqrt{d-1}))^{1/p}}
\le c.
\end{equation}
To find a (sufficiently high) uniform lower bound $\alpha$  for 
\begin{equation}\label{weaktypepgauss3}
A_{R \sqrt{d-1}, \gamma^d}  \mathbf{1}_{B(0, r \sqrt{d-1})} (x) 
= 
\frac{ \gamma^d ( B(0, r \sqrt{d-1})\cap (B(x, R \sqrt{d-1}))}{\gamma^d (B(x, R \sqrt{d-1}))},
\end{equation}
whenever  $x \in D$, we use
the following facts,  taken from \cite[Proof of Lemma 5.1]{CriSjo} (note that we have chosen
a different, more common normalization for the gaussian measure, but this makes no essential difference; for the
justification of the assertions below we refer the reader to the original paper). Criado
and Sj\"ogren show that if $\|x\|_2^2 = \sqrt{d-1}$, then for each $R\in (0,1)$, an $r\in (0,1)$ can be chosen in such a
way that 
\begin{equation}\label{crisjo1}
\frac{ \gamma^d ( B(0, r \sqrt{d-1})\cap B(x, R \sqrt{d-1}))}{\gamma^d (B(x, R \sqrt{d-1}))} 
\ge
\Theta\left(\frac{1}{\sqrt{d-1}}\right),
\end{equation}
where  $\Theta$ denotes exact order. In view of this bound and of the
denominator in (\ref{weaktypepgauss2}),
we want to select $r$ as small as possible, which is where the choice $r = R = \sqrt{3}/2$
comes from, as we shall see next.
Let 
$$
F(t, R) := \left(t - \frac{(1 + t - R^2)^2}{4}\right) e^{-t},
$$
where $(1 - R)^2 \le t \le (1 + R)^2$, 
let 
$$
t(R) := 2 + R^2 - \sqrt{1 + 4 R^2} ,
$$
and let
$$
G(R) := F(t(R), R).
$$
For each fixed $R$, $t(R)$ maximizes $F$, so $F(t, R) \le G(R)$ 
(cf. \cite[p. 609]{CriSjo} for justifications of the choices and claims made). Set $r(R):= \sqrt{t(R)}$.
Since 
 $R = \sqrt3 /2$ is the only zero of $t^\prime$ in $(0,1)$ and 
$t^{\prime\prime}(\sqrt3 /2) > 0$, the function $t(R)$ has a local minimum there, which is easily seen to
be the unique global minimum (for instance, by checking the endpoints).
Hence, for $0 < R < 1$, $t(R) \ge t(\sqrt3 /2) 
= 3/4$, and we choose $r = \sqrt3 /2 =R$. Given $x\in D$, by rotational invariance we may suppose that
$x = u e_1$, where $\sqrt{d-1} - 1/\sqrt{d-1} \le u \le \sqrt{d-1}$, and as before, $e_1$ is the first vector in the
standard basis of $\mathbb{R}^d$. Criado and Sj\"ogren show that 
\begin{equation}\label{crisjo2}
\gamma^d B(\sqrt{d-1} \ e_1, R \sqrt{d-1}))
 \le
\frac{ 2 \sigma_{d-2}(\mathbb{S}^{d-2} )\left(\sqrt{d-1}\right)^{d} R}{(d - 1) \sqrt{1 - R^2}}
\ G(R)^{\frac{d -1}{2}} =: V(R).
\end{equation}

Using this, for $u e_1\in D$ we have
\begin{equation}\label{average1}
\frac{ \gamma^d ( B(0, \frac{\sqrt3}{2} \sqrt{d-1})\cap B(u e_1, \frac{\sqrt3}{2} \sqrt{d-1}))}
{\gamma^d (B(u e_1, \frac{\sqrt3}{2} \sqrt{d-1}))} 
\end{equation}
\begin{equation}\label{average2}
\ge
\frac{ \gamma^d ( B(0, \frac{\sqrt3}{2} \sqrt{d-1})\cap B( \sqrt{d-1} \ e_1, \frac{\sqrt3}{2} \sqrt{d-1}))}
{\gamma^d (B( \sqrt{d-1} \ e_1, (\frac{\sqrt3}{2}  + \frac{1}{d-1}) \sqrt{d-1}))}
\end{equation}
\begin{equation}\label{average3}
\ge
\frac{ \gamma^d ( B(0, \frac{\sqrt3}{2} \sqrt{d-1})\cap B( \sqrt{d-1} \ e_1, \frac{\sqrt3}{2} \sqrt{d-1}))}
{{V(\frac{\sqrt3}{2}  + \frac{1}{d-1})}}
\end{equation}
\begin{equation}\label{average4}
=
\frac{ V(\frac{\sqrt3}{2})}
{V(\frac{\sqrt3}{2}  + \frac{1}{d-1})} 
\frac{ \gamma^d ( B(0, \frac{\sqrt3}{2} \sqrt{d-1})\cap B( \sqrt{d-1} \ e_1, \frac{\sqrt3}{2} \sqrt{d-1}))}
{V(\frac{\sqrt3}{2})}.
\end{equation}
The last factor in the preceding line is  bounded below (cf. \cite[p. 610]{CriSjo})
by 
\begin{equation}\label{crisjo11}
\frac{ c_0}{\sqrt{d-1}},
\end{equation}
where  $c_0 $ is a strictly positive constant 
(in particular, it is independent of $d$; it may depend on the choices
of $R$ and $r$, which are equal to  $\sqrt3/2$ in our case).
Regarding 
$$\frac{ V(\frac{\sqrt3}{2})}
{V(\frac{\sqrt3}{2}  + \frac{1}{d-1})},
$$
as $d$ becomes large, the changes that $R$ and $R^2$ undergo in the fraction of formula (\ref{crisjo2}),
when $R$ takes the value $\frac{\sqrt3}{2}  + \frac{1}{d-1}$ instead of $\frac{\sqrt3}{2}$,
become vanishingly small, so all we need to do is  to bound 
\begin{equation}\label{G}
\left(\frac {G\left(\frac{\sqrt3}{2}\right) }{G\left(\frac{\sqrt3}{2}  + \frac{1}{d-1}\right)}\right)^{\frac{d -1}{2}}
\end{equation}
from below. A computation shows that 
$G^{\prime\prime}(\sqrt3/2) <  0$, so $G$ is locally concave at $\sqrt3/2$, and thus, 
for $d$ sufficiently high, $G(\sqrt3/2 ) + 1/(d-1)) \le G(\sqrt3/2) + G^{\prime}(\sqrt3/2)/(d-1)$. Since $G(\sqrt3/2) = 1/(2 e^{3/4})$ and $G^{\prime}(\sqrt3/2) = \sqrt{3}/(2 e^{3/4})$, we have that
\begin{equation}\label{G1}
\left(\frac {G\left(\frac{\sqrt3}{2}\right) }{G\left(\frac{\sqrt3}{2}  + \frac{1}{d-1}\right)}\right)^{\frac{d -1}{2}}
\ge
\left(\frac{1}{1 + \frac{\sqrt3}{d -1}} \right)^{\frac{d -1}{2}} > e^{-1}
\end{equation}
for $d$ large enough.

Finally, since $g(r) := r^{d-1} e^{-r^2/2}$ is increasing for
$0< r < \sqrt{d-1}$, 
$$
\gamma^d ( B(0, \frac{\sqrt3}{2} \sqrt{d-1})) 
=
\frac{\sigma_{d-1}(\mathbb{S}^{d-1})}{(2 \pi )^{d/2}} 
\int_{0}^{\frac{\sqrt3}{2} \sqrt{d-1}} g(r) \  dr
$$
$$
\le \frac{\sigma_{d-1}(\mathbb{S}^{d-1})}{(2 \pi )^{d/2}} 
g\left(\frac{\sqrt3}{2} \sqrt{d-1}\right)  \int_{0}^{\frac{\sqrt3}{2} \sqrt{d-1}} dr
= \frac{\sigma_{d-1}(\mathbb{S}^{d-1})}{(2 \pi )^{d/2}} 
\left(\frac{\sqrt3}{2} \sqrt{d-1}\right)^d   e^{- \frac{3 d - 3}{8}}.
$$
Using Stirling's formula, for $d$ large the right hand side of the preceding equality  can be bounded above
by
$$
\sqrt{d} 
\left(\frac{3 e^{1/4}}{4}\right)^{\frac{d}{2}}.
$$ 
Putting together in formula (\ref{weaktypepgauss2}) the bounds (\ref{area1})-(\ref{area2}), (\ref{crisjo1})-(\ref{G1}), and the last estimate,
we conclude that for $d$ sufficiently large,
$$
\|A_{\sqrt{3d - 3}/2} \|_{L^p\to L^{p, \infty}} 
\ge
\left(\frac{2}{3^{1/2} e^{1/8}}\right)^{\frac{d}{p}} \Theta\left(\frac{1}{d^{1/2 + 1/p}}\right).
$$
Now $\frac{2}{3^{1/2}  e^{1/8}} > 1.019$, so for $d$ large enough the factor 
$\Theta\left(\frac{1}{d^{1/2 + 1/p}}\right)$ can be absorbed in the exponential, and we get
$$
\left\|A_{\frac{\sqrt{3d - 3}}{2}} \right\|_{L^p\to L^{p, \infty}} >1.019^{d/p}.
$$
\qed

I do not know whether the uniform $L^1$  boundedness of the operators $A_r$ in the two preceding cases
(exponential and gaussian) are instances of a more general result in euclidean spaces, or whether there are measures
$\nu$ in $\mathbb{R}^d$ for which $\sup_{r>0}\|A_{r,\nu}\|_{L^1\to L^1} = \infty$. Curiously, for the one-directional averaging
operators in $\mathbb{R}$, examples of such measures are easy to find.

Given $\mu$ on $\mathbb{R}$  and $s > 0$, define, for all $x \in \mathbb{R}$  such that
 $\mu ([x, x + s]) > 0$,  the right directional averaging operator as
$$
A_{s,\mu}^r f(x) := \frac{1}{\mu ([x, x + s])} 
\int_{\mathbb{R}}  f (y) \mathbf{1}_{[x, x + s]}(y)   \  d \mu (y).
$$

\begin{theorem}  \label{onedir} There exists a measure $\nu$ on $\mathbb{R}$ 
such that  $\|A^r_{1,\nu}\|_{L^1\to L^1} = \infty$.
 \end{theorem}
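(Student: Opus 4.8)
The plan is to reproduce the Fubini computation of (\ref{fubini})--(\ref{fubini2}), adapted to the one-sided kernel, so as to reduce the operator norm to a single scalar quantity, and then to construct $\nu$ as a disjoint union of one-sided ``combs'' of atoms for which that quantity diverges. Concretely, for $0\le f\in L^1(\nu)$, Tonelli's theorem (all integrands are nonnegative) and the identity $\mathbf{1}_{[x,x+1]}(y)=\mathbf{1}_{[y-1,y]}(x)$ give
\[
\|A^r_{1,\nu} f\|_{L^1(\nu)} = \int_{\mathbb{R}} f(y)\,\Phi(y)\,d\nu(y),
\qquad
\Phi(y):=\int_{[y-1,y]}\frac{d\nu(x)}{\nu([x,x+1])}.
\]
Thus it suffices to exhibit a $\nu$ for which $\Phi$ takes arbitrarily large values on atoms; testing against indicators of single atoms then forces $\|A^r_{1,\nu}\|_{L^1\to L^1}=\infty$.

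For the construction I would, for each $n\ge 1$, place inside the window $[10n,10n+\tfrac12]$ a comb of $k_n$ equal-mass atoms $x^{(n)}_1<\dots<x^{(n)}_{k_n}$ of mass $w_n$, with $k_n\to\infty$. The window length $\tfrac12<1$ and the spacing $10$ between consecutive windows are chosen so that distinct combs lie at distance greater than $1$, while each comb fits inside a window shorter than $1$. Any bounded interval then meets only finitely many combs, so $\nu$ is finite on balls and, being Borel on the separable space $\mathbb{R}$, is automatically $\tau$-smooth; hence $\nu$ is a legitimate metric measure. The crucial bookkeeping is that, because the window has length below $1$ and the next comb is far to the right, the interval $[x^{(n)}_i,x^{(n)}_i+1]$ meets exactly the atoms $x^{(n)}_i,\dots,x^{(n)}_{k_n}$, so that $\nu([x^{(n)}_i,x^{(n)}_i+1])=(k_n-i+1)\,w_n$.

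The key computation is to evaluate $\Phi$ at the rightmost atom $x^{(n)}_{k_n}$. Since $x^{(n)}_{k_n}-1<10n\le x^{(n)}_1$ and the interval reaches neither comb $n-1$ nor any point beyond $x^{(n)}_{k_n}$, the set $[x^{(n)}_{k_n}-1,\,x^{(n)}_{k_n}]$ contains precisely the whole $n$-th comb and no other atom, whence
\[
\Phi\bigl(x^{(n)}_{k_n}\bigr)
=\sum_{i=1}^{k_n}\frac{w_n}{(k_n-i+1)\,w_n}
=\sum_{m=1}^{k_n}\frac{1}{m}
=H_{k_n}.
\]
Testing with $f_n:=\mathbf{1}_{\{x^{(n)}_{k_n}\}}$ yields $\|f_n\|_{L^1(\nu)}=w_n$ and $\|A^r_{1,\nu}f_n\|_{L^1(\nu)}=w_n\,H_{k_n}$, so $\|A^r_{1,\nu}\|_{L^1\to L^1}\ge H_{k_n}\to\infty$ as $n\to\infty$.

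The routine points I would still verify explicitly are the geometric inclusions of the previous paragraph — that each $[x^{(n)}_i,x^{(n)}_i+1]$ and each $[x^{(n)}_{k_n}-1,x^{(n)}_{k_n}]$ sees exactly the intended atoms — which follow from window length below $1$ and inter-comb gaps above $1$. This is not the hard part. The conceptual heart, and the reason there is no centered-ball analogue of this phenomenon (in contrast with Theorems \ref{exponential} and \ref{gaussian}), is the genuine asymmetry of directional averaging, already visible in Example \ref{broom}: the interval based at the left end of a comb swallows every atom to its right, making its reciprocal weight only moderately large, so that sweeping back from the right end turns these reciprocals into the divergent harmonic series $\sum 1/m$. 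A two-sided interval around an interior atom would instead capture mass symmetrically, and for such clustered atomic measures a local comparability estimate would cap the analogous sum.
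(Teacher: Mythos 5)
Your proof is correct: the Tonelli reduction to the weight $\Phi$, the comb geometry (window length $\tfrac12<1$, inter-comb gaps $>1$), the identity $\nu([x^{(n)}_i,x^{(n)}_i+1])=(k_n-i+1)w_n$, and the resulting harmonic-number lower bound $\|A^r_{1,\nu}\|_{L^1\to L^1}\ge H_{k_n}\to\infty$ all check out. The paper exploits the same underlying mechanism but via a different construction: it takes the absolutely continuous measure $d\nu(x)=(\mathbf{1}_B(x)+e^{-x})\,dx$ with $B=\cup_{n\ge 0}[2n,2n+1)$, and tests against Dirac deltas $\delta_{2n+1}$ (invoking an approximation by $L^1$ functions); for $x\in[2n,2n+1)$ one has $A^r_{1,\nu}\delta_{2n+1}(x)\ge (2n+1-x+e^{-2n})^{-1}$, and since the density exceeds $1$ there, integration in $x$ gives $\|A^r_{1,\nu}\delta_{2n+1}\|_{L^1}\ge \log(1+e^{2n})\to\infty$. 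In both arguments the divergence is harmonic: the mass of $[x,x+1]$ is proportional to the distance (or the number of atoms) between $x$ and a point beyond which the measure is negligible at scale $1$, and one then sums or integrates reciprocals. Your route differs in two worthwhile ways: your measure is purely atomic and your test functions $\mathbf{1}_{\{x^{(n)}_{k_n}\}}$ are honest $L^1$ functions, so you avoid the delta-approximation step entirely; and your explicit Tonelli computation is exactly the one-sided analogue of (\ref{fubini})--(\ref{fubini2}), which makes the duality with the weight $\Phi$ transparent. What the paper's example buys instead is that the measure has full support and is absolutely continuous with respect to Lebesgue measure, so the failure of uniform $L^1$ bounds is seen not to be an artifact of atoms or of gaps in the support.
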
 

 By way of comparison, for $p>1$ and denoting by $M^r$ the right directional Hardy-Littlewood maximal operator,
given any  $\mu $ on $\mathbb{R}$ we have that
 $$
\sup_{s>0} \|A^r_{s,\mu}\|_{L^p\to L^p} \le  \|M^r_{\mu}\|_{L^p\to L^p} 
 =  \frac{p }{p-1}$$ 
(cf. \cite[p. 102, Exercise 2.1.11]{Gra}).

\begin{proof}  Let $B := \cup_{n \ge 0} [2n , 2n + 1)$, and let $d\nu (x)  := (\mathbf{1}_B (x) + e^{-x}) dx$. 
Again by approximation, we can use Dirac deltas instead of functions. For  $x\in [2n, 2 n + 1)$ we have that 
$$
A^r_{1,\nu} \delta_{2n + 1} (x) \ge  \frac{1}{2 n + 1 - x + e^{-2n}}.$$
 Since $d\nu (x)> 1$ on  $[2n, 2n + 1)$,
we conclude that $\lim_{n\to \infty}   \|A^r_{1,\nu} \delta_{2n + 1} \|_{L^1} = \infty$.
\end{proof}

\section{Local comparability vs doubling} 

Next we study the local comparability condition and its relationship to doubling.
Recall that any such comparison must be made in geometrically doubling spaces,
the only ones that support doubling measures.

\begin{example}\label{nondoub} 
For a very simple example of a nondoubling locally comparable
measure, on a geometrically doubling space, just take the space $X$ of 2 points $x$ and $y$ at distance 1, and let  $\mu := \delta_x$.
Then balls are either disjoint or the whole space (when $r >1$), so $C(\mu)=1$.
Actually, the same argument shows that $C(\nu)=1$ for every measure $\nu$ on $X$.
A variant of the preceding example, but admitting nondoubling measures with full
support, is given by  $\mathbb{N}$ with $d(m,n) = 1$ when $m\ne n$. If  $\mu$ is
any finite Borel measure on $\mathbb{N}$, then $C(\mu) = 1$. Alternatively, one can
just recall Example \ref{ultra}, noting that the preceding spaces are (discrete) ultrametric.
\end{example}

Example \ref{nondoub}  shows that a nontrivial locally comparable measure, can assign 
measure zero to some balls. For another difference between doubling and local comparability, note that if $\mu$ is doubling and 
$\mu\{x\} > 0$, then $x$ is an isolated point of $X$. 
This need not be the case when we only have local comparability, as can be seen by 
 choosing a measure with atoms
in an ultrametric space without isolated points.

Speaking loosely, the better the connectivity properties of the space, and the smaller the ``gaps"
or ``holes" in it, the more similar to doubling
are the measures satisfying local comparability. For instance, if $X$ is connected,
 then the local comparability of $\mu$ entails that no ball
has measure 0, and furthermore,
$\mu$ is continuous, that is, for every $x\in X$, $\mu\{x\} =0$. The next result
does not require $X$ to be geometrically doubling.

\begin{theorem}  \label{contmeas} If  a metric measure space $(X, d, \mu)$ is connected and
 $\mu$ satisfies a   
local comparability
 condition, then all balls have strictly positive measure, and for every $x\in X$, $\mu\{x\} =0$.
 \end{theorem}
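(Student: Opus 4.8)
The plan is to prove two assertions: first, that every ball has strictly positive measure, and second, that $\mu$ is continuous. I would begin with the positivity claim by using a connectedness argument. Suppose, for contradiction, that some ball $B(x_0,r_0)$ has measure zero. Fix any radius $r>0$ and consider the set $U_r := \{x \in X : \mu(B(x,r)) = 0\}$. The key observation is that local comparability for the radius $r$ forces $U_r$ to be open: if $\mu(B(x,r)) = 0$ and $d(x,y) < r$, then by Definition \ref{loccomp} we have $\mu(B(y,r)) \le C\,\mu(B(x,r)) = 0$, so the entire ball $B(x,r)$ lies in $U_r$, showing $U_r$ is open. The same interchangeability (noted just after Definition \ref{loccomp}) shows that $\mu(B(x,r)) = 0$ if and only if $\mu(B(y,r)) = 0$ whenever $d(x,y) < r$, so the complement $X \setminus U_r$ is also open. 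Since $X$ is connected and $U_r \ne \emptyset$ by assumption (some ball of some radius has measure zero; I would need to pass to the correct radius here), we conclude $U_r = X$, meaning every ball of radius $r$ has measure zero.

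The subtlety I would handle carefully is matching radii: the assumed zero-measure ball $B(x_0,r_0)$ gives $x_0 \in U_{r_0}$, hence by the connectedness dichotomy $U_{r_0} = X$, so \emph{every} ball of radius $r_0$ has measure zero. But then for any point $z$, covering $X$ (or at least a larger ball) by countably many balls of radius $r_0$ — using separability, which follows since a metric measure space with finite measure on balls is essentially separable, or more directly by noting $X = \bigcup_n B(z, nr_0)$ can be exhausted — I would show $\mu \equiv 0$, contradicting the standing assumption that $\mu$ is not identically zero. This exhaustion step is where I expect the main friction: I must produce a countable cover of $X$ by radius-$r_0$ balls each of measure zero, and invoke $\tau$-smoothness (or countable subadditivity after reducing to a countable subcover) to conclude $\mu(X) = 0$. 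The cleanest route is: every point lies in some $B(z,nr_0)$, and each such ball, being coverable by finitely many radius-$r_0$ balls of measure zero (this uses no geometric doubling, only that a bounded set in the $r_0$-net sense can be covered — actually I would instead directly cover $B(z,nr_0)$ by the balls $B(w, r_0)$ over a maximal $r_0$-separated net, all of measure zero), has measure zero.

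For the continuity claim $\mu\{x\} = 0$, I would argue that if $\mu\{x_0\} > 0$ for some $x_0$, then $\mu(B(x_0,r)) \ge \mu\{x_0\} > 0$ for all $r > 0$. By local comparability, $\mu(B(y,r)) \ge C^{-1}\mu(B(x_0,r)) \ge C^{-1}\mu\{x_0\}$ for all $y$ with $d(x_0,y) < r$. Now I would exploit connectedness again: the positivity of the point mass propagates, but more usefully, I would take $r \to 0$. For each fixed $y \ne x_0$, choosing $r$ with $d(x_0,y) < r$ small forces $\mu(B(y,r)) \ge C^{-1}\mu\{x_0\}$, so letting $r \downarrow d(x_0,y)^+$ and then considering a sequence of points $y_n \to x_0$ with $y_n \ne x_0$ — such points exist because $X$ is connected hence has no isolated points (a single isolated point would disconnect $X$ unless $X$ is a singleton, and a singleton has no two points of differing distance, but I should note that if $X$ is a single point the claim about balls still must be checked trivially). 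The contradiction I aim for: near $x_0$ there are infinitely many disjoint small balls each carrying mass bounded below by $C^{-1}\mu\{x_0\}$, forcing some ball $B(x_0,R)$ to have infinite measure, contradicting the finiteness $\mu(B(x_0,R)) < \infty$ guaranteed in the definition of a metric measure space. The delicate point is arranging genuinely disjoint balls of comparable mass; I would pick an $r_0$-separated sequence $y_n \to x_0$ (possible by non-isolation) and radii small enough that the balls $B(y_n, \rho_n)$ are pairwise disjoint yet each still satisfies $\mu(B(y_n,\rho_n)) \ge C^{-1}\mu\{x_0\}$, which requires the local comparability radius to be chosen consistently — this disjointification against the lower mass bound is the real obstacle, and I would manage it by a careful interleaving of the separation scale and the comparability radius.
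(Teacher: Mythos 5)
Your first claim (all balls have positive measure) is proved correctly, and by what amounts to the paper's own argument in different clothing: you show the set $U_r=\{x:\mu(B(x,r))=0\}$ is clopen via Definition \ref{loccomp} and invoke connectedness, where the paper instead iterates blossoms and uses chainability; both then kill the total mass with $\tau$-smoothness. One simplification: your worries about countable subcovers and separated nets are unnecessary, since $\tau$-smoothness applies to the \emph{arbitrary} cover $\{B(x,r_0):x\in X\}$ directly (every finite subunion is null, hence so is $X$).

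The second claim, however, has a genuine gap, and it is exactly the point you defer to a ``careful interleaving of the separation scale and the comparability radius'': no such interleaving exists. Definition \ref{loccomp} compares $B(y,r)$ with $B(x_0,r)$ only when $d(x_0,y)<r$, i.e.\ only when $B(y,r)$ \emph{contains} $x_0$. So every ball for which your lower bound $\mu(B(y,r))\ge C^{-1}\mu\{x_0\}$ is available contains the point $x_0$; such balls pairwise intersect, so they can never be disjointified, and the requirement of disjointness ($\rho_n\le d(x_0,y_n)$) directly contradicts the requirement of comparability ($\rho_n>d(x_0,y_n)$). Worse, for these balls the bound is vacuous -- $x_0\in B(y,r)$ already gives $\mu(B(y,r))\ge\mu\{x_0\}$ -- so no tension with the finiteness of $\mu(B(x_0,R))$ can ever be produced this way: an atom at $x_0$ is perfectly consistent with everything your argument establishes.

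To repair it you need comparability for balls that \emph{avoid} $x_0$, which requires an extra ingredient. One route (which salvages your finiteness strategy) is Lemma \ref{c2} of the paper: intersecting balls of equal radius are comparable with constant $C^2$, which applies to $B(y,\rho)$ with $\rho<d(x_0,y)<2\rho$; connectedness supplies points at such intermediate distances (the image of $d(x_0,\cdot)$ is an interval), and placing such balls in disjoint annuli around $x_0$ does contradict local finiteness. The paper's own route is different: using continuity from above, pick $r$ with $\mu(B(x_0,r))<(1+1/n)\mu\{x_0\}$, take $y$ near $x_0$, and compare the two balls of common radius $d(x_0,y)$ centered at $x_0$ and at $y$; the second misses the atom and lies in $B(x_0,r)$, so the ratio of their measures exceeds $n$, contradicting Lemma \ref{c2} once $n>C_\mu^2$ -- with connectedness invoked a second time (non-openness of the closed ball $B^{cl}(x_0,d(x_0,y))$) to produce intersecting balls in the case where the two natural candidates are disjoint. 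Either way, the missing idea is comparability of intersecting balls whose centers are farther apart than in Definition \ref{loccomp}, not a finer choice of scales within your scheme.
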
 
 
 \begin{proof}  If for some $r>0$ and some $x\in X$, $\mu B(x,r) = 0$, then by local comparability,
 for every $y\in B(x,r)$, $\mu B(y,r) = 0$, and by
  $\tau$-smoothness, $\mu Bl(x, r, r) = 0$. Define $Bl_1 := Bl(x, r, r)$ and for $n\ge 1$,
 $Bl_{n + 1} := Bl(Bl_n, r)$. Again by $\tau$-smoothness, for all $n\ge 1$, $\mu Bl_n = 0$,
 so $\mu \cup_n Bl_n = 0$. But $\cup_n Bl_n = X$ (this is well known and it follows
 from the fact that $\cup_n Bl_n$ is nonempty, open, and closed, so it is $X$; in topological
 terminology, connected metric spaces are chainable). Thus,  the nontriviallty of
 $\mu$ is contradicted.

Next, 
 suppose that for some $x\in X$ we have $\mu\{x\}  > 0$. Select $n\in\mathbb{N}\setminus
 \{0\}$
  and
 $r > 0$ such that $\mu(B(x,r)) < (1 + 1/n) \mu\{x\}$. Since $x$ is not isolated there exists a $y\in 
 B(x, r/3)\setminus \{x\}$. Now if $B(x, d(x,y))\cap B(y, d(x,y)) \ne \emptyset$, we can use these two balls to
 conclude that a local comparability condition cannot hold, since 
 $\mu B(x, d(x,y))/\mu B(y, d(x,y)) \ge \mu \{x\}/\mu B(y, d(x,y)) > n$ for $n$ arbitrary. 
 So assume  that $B(x, d(x,y))\cap B(y, d(x,y)) = \emptyset$. By connectivity, the closed
 ball $B^{cl}(x, d(x,y))$ is not open, whence there exists a $z\in B^{cl}(x, d(x,y))$ and a sequence
 $\{z_n\}^\infty_1$ in $\left(B^{cl}(x, d(x,y))\right)^c$ such that $\lim_n d(z_n, z) = 0$.
Select  $z_N \in B(z, d(x,y)/3)$. Then $d(x,z_N) > d(x, z) = d(x,y)$
 and $2 d(x,z_N) < r$, so 
  $z\in B(x, d(x,z_N))\cap B(z_N, d(x,z_N))$  and as before, 
  $\mu B(x, d(x,z_N))/\mu B(z_N, d(x,z_N)) > n$.
 \end{proof}

\begin{lemma} \label{blossombounds} Let $(X, d, \mu)$ be a metric measure space, 
$D$-geometrically doubling, and such that $\mu$ satisfies a 
local comparability
 condition. Then  $\mu(Bl(x,r, r))\le D \ C_\mu^3 \ \mu(B(x,r))$, 
 and $\mu(Blu(x,r, r))\le D^2 \  C_\mu^4 \ \mu(B(x,r))$.
\end{lemma}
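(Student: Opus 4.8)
The plan is to reduce both inequalities to the same two ingredients: a \emph{containment} of the blossom inside a fixed dilate of $B(x,r)$, and a \emph{covering} of that dilate by a controlled number of balls of radius exactly $r$, whose measures are then each compared to $\mu(B(x,r))$ through a short chain of local comparability estimates. The point to keep in mind is that the covering balls produced by geometric doubling may have centers as far as (almost) $2r$ or $3r$ from $x$, so Definition \ref{loccomp} does not apply to them directly; the role of the chain is precisely to bridge this gap.

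For the centered blossom I would first record the containment $Bl(x,r,r)\subseteq B(x,2r)$: if $w\in B(z,r)$ with $z\in B(x,r)$, then $d(x,w)<2r$. Since $X$ is $D$-geometrically doubling, $B(x,2r)$ is covered by at most $D$ balls $B(y_1,r),\dots,B(y_D,r)$ of radius $r$ (take $\rho=2r$ in Definition \ref{geomdoub}), hence so is $Bl(x,r,r)$. It then suffices to bound $\mu(B(y_i,r))$ for those $y_i$ whose ball meets $Bl(x,r,r)$, the others contributing nothing. For such an $i$, pick $w\in B(y_i,r)\cap Bl(x,r,r)$ and $z\in B(x,r)$ with $w\in B(z,r)$; then $x,z,w,y_i$ are joined by three links each of length $<r$, namely $d(x,z)<r$, $d(z,w)<r$, $d(y_i,w)<r$. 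Applying the (interchanged) local comparability estimate once per link gives $\mu(B(y_i,r))\le C_\mu^{3}\,\mu(B(x,r))$, and summing over the at most $D$ relevant balls yields $\mu(Bl(x,r,r))\le D\,C_\mu^{3}\,\mu(B(x,r))$.

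For the uncentered blossom I would proceed identically, starting from $Blu(x,r,r)=Bl(Bl(x,r,r),r)\subseteq Bl(B(x,2r),r)\subseteq B(x,3r)$. Covering $B(x,3r)$ (or, more comfortably, $B(x,4r)$) now requires two applications of geometric doubling, producing at most $D^2$ balls of radius $r$; concentrically enlarging the radius-$3r/4$ balls obtained in the second step to radius $r$ keeps the cover valid. The chain gains one extra link: a point $p\in B(y_i,r)\cap Blu(x,r,r)$ comes with $q\in Bl(x,r,r)$ and $z\in B(x,r)$ satisfying $d(x,z)<r$, $d(z,q)<r$, $d(q,p)<r$, $d(p,y_i)<r$, so four links give $\mu(B(y_i,r))\le C_\mu^{4}\,\mu(B(x,r))$. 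Summing over the at most $D^2$ balls yields $\mu(Blu(x,r,r))\le D^2\,C_\mu^{4}\,\mu(B(x,r))$.

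The only place demanding care, and what I regard as the main (if modest) obstacle, is the bookkeeping that ties the exponent on $C_\mu$ to the combinatorial length of the blossom: each nesting of a blossom adds one link of length $<r$ to the chain, and each doubling of the ambient radius multiplies the ball count by $D$, so one must verify that a covering ball meeting the blossom really is reachable from $x$ by exactly the claimed number of short steps. Lemma \ref{c2} could be used to collapse pairs of steps if one preferred, but the direct link-by-link application of local comparability is cleanest and produces the stated constants $C_\mu^{3}$ and $C_\mu^{4}$.
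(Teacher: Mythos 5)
Your proof is correct and follows essentially the same route as the paper: contain the blossom in $B(x,2r)$ (resp.\ $B(x,3r)$), cover that ball by at most $D$ (resp.\ $D^2$) balls of radius $r$ via geometric doubling, and compare each covering ball that meets the blossom to $B(x,r)$ through a chain of intersecting radius-$r$ balls. The only cosmetic difference is that you apply Definition \ref{loccomp} link by link where the paper invokes Lemma \ref{c2} to collapse two links at a time, yielding the same constants $C_\mu^{3}$ and $C_\mu^{4}$.
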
  

\begin{proof} Cover the ball $B(x, 2r)$  with at most $D$ balls of radius $r$, and disregard all such
balls having empty intersection with $Bl(x, r, r)$. Since $Bl(x, r, r)\subset B(x, 2r)$, this yields a cover 
$B(y_{i}, r), \dots , B(y_{M}, r)$ of $Bl(x, r, r)$ with $M\le D$, and 
by Lemma \ref{c2},
such that for every $1 \le k \le M$, 
$\mu B(y_{k}, r) \le C_\mu^3  \  \mu B(x,r)$. Hence 
$\mu Bl(x, r, r) \le D \  C_\mu^3 \ \mu (B(x,r))$. 

The second inequality is proven in the same way:  $Blu(x, r, r) \subset B(x, 3r)$;
cover  $B(x, 3r)$  with at most $D^2$ balls of radius $r$,  and disregard all such
balls  having empty intersection with $Blu(x, r, r)$. This yields a cover 
$B(y_{i}, r), \dots , B(y_{M}, r)$ of $Blu(x, r, r)$ with $M\le D^2$, and 
(applying Lemma \ref{c2} twice)
such that for every $1 \le k \le M$, 
$\mu B(y_{k}, r) \le C_\mu^4  \ \mu B(x,r)$. Hence 
$\mu Blu(x, r, r) \le D^2 \  C_\mu^4 \ \mu B(x,r)$. 
\end{proof}

It is obvious that in an arbitrary  metric measure space, boundedness of blossoms entails local comparability, since whenever
$d(x,y) < r$, $B(x,r) \subset Bl(y,r,r)$ and $B(y,r) \subset Bl(x,r,r)$. If additionally
the space is geometrically doubling, then the conditions are equivalent.

\begin{corollary} \label{blossombounds1} Let $(X, d, \mu)$ be a geometrically doubling metric measure space. The following are equivalent:

a) $\mu$ satisfies a 
local comparability
 condition. 
 
 b) There exists a constant $K_1 \ge 1$ such that for every $x\in X$ and every
 $r > 0$, $\mu(Bl(x,r, r))\le K_1 \ \mu(B(x,r))$.
 
 c) There exists a constant $K_2 \ge 1$ such that for every $x\in X$ and every
 $r > 0$, $\mu(Blu(x,r, r))$ $\le K_2 \ \mu(B(x,r))$.
 \end{corollary}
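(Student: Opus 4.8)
The plan is to establish the cycle of implications (a) $\Rightarrow$ (c) $\Rightarrow$ (b) $\Rightarrow$ (a), since each individual step is short and the geometric doubling hypothesis is needed in only one of them. The implication (a) $\Rightarrow$ (c) is precisely the second inequality of Lemma \ref{blossombounds}: assuming local comparability with constant $C_\mu$ and geometric doubling constant $D$, one obtains $\mu(Blu(x,r,r)) \le D^2 C_\mu^4 \, \mu(B(x,r))$, so (c) holds with $K_2 := D^2 C_\mu^4$. This is the only place where geometric doubling enters the argument.

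For (c) $\Rightarrow$ (b), I would simply invoke the set inclusion $Bl(x,r,r) \subset Blu(x,r,r)$, which holds because $Blu(x,r,r) = Bl(Bl(x,r,r),r)$ and any set $T$ satisfies $T \subset Bl(T,r)$ (each $w \in T$ lies in $B(w,r)$). Monotonicity of $\mu$ then gives $\mu(Bl(x,r,r)) \le \mu(Blu(x,r,r)) \le K_2 \, \mu(B(x,r))$, so (b) holds with $K_1 := K_2$.

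For (b) $\Rightarrow$ (a), I would use the inclusion already recorded in the text preceding the corollary: if $d(x,y) < r$ then $B(x,r) \subset Bl(y,r,r)$. Applying (b) to the center $y$ yields $\mu(B(x,r)) \le \mu(Bl(y,r,r)) \le K_1 \, \mu(B(y,r))$. Since this holds for every such pair $x,y$ and every $r$, the measure satisfies a $K_1$ local comparability condition, closing the cycle.

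I do not expect a genuine obstacle here: the content is entirely carried by Lemma \ref{blossombounds} together with two elementary inclusions. The only point worth flagging is bookkeeping, namely making explicit that geometric doubling is used \emph{solely} in the direction (a) $\Rightarrow$ (c), whereas the reverse implications (c) $\Rightarrow$ (b) and (b) $\Rightarrow$ (a) hold in any metric measure space and rely only on monotonicity of $\mu$ and the nesting $B(x,r) \subset Bl(y,r,r) \subset Blu(y,r,r)$.
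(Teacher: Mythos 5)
Your proposal is correct and is essentially the paper's own argument: the paper proves the corollary by combining Lemma \ref{blossombounds} (local comparability plus geometric doubling bounds both kinds of blossoms) with the elementary inclusion $B(x,r)\subset Bl(y,r,r)$ for $d(x,y)<r$ noted just before the statement. Your only variation is organizing these same ingredients into the cycle (a) $\Rightarrow$ (c) $\Rightarrow$ (b) $\Rightarrow$ (a), which is a harmless bookkeeping difference.
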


In a certain sense, it could be said that from the viewpoint of the maximal operator
the doubling condition is irrelevant, since in geometrically doubling spaces it can
be replaced by local comparability (by the preceding corollary together with the
Vitali covering lemma) and off geometrically doubling spaces, there are
no doubling measures.  However, if nearby points can always
be joined by  bounded chains of intersecting  balls with the same radius, then 
in a geometrically doubling space, local comparability implies doubling.
Hence, in many spaces of interest both conditions are equivalent.

\begin{lemma}\label{SS2} Let $(X, d, \mu)$ be a metric measure space, 
$D$-geometrically doubling, and such that $\mu$ satisfies a  
local comparability
 condition. Suppose there exists a $K > 1$ such that for all $x, z \in X$ and all $r >0$,
 whenever $d(x,z) < 2 r$ there exists a chain of balls $B(x, r) = B(y_0,r), \dots, B(y_m, r)$
 satisfying
$m\le K$, $z\in B(y_m, r)$ and for $j = 0, \dots, m-1$, $B(y_j, r) \cap  B(y_{j + 1}, r)\ne \emptyset$.
Then $\mu$ is  $D \  C_\mu^{2 K + 3}$- doubling. 
 \end{lemma}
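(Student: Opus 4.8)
The plan is to bound $\mu(B(x,2r))$ by a controlled multiple of $\mu(B(x,r))$ in three moves: use geometric doubling to reduce to finitely many radius-$r$ balls, use the chain hypothesis to connect each of those balls back to $B(x,r)$, and use Lemma \ref{c2} to turn each link of a chain into a factor of $C_\mu^2$. Accordingly, first I would invoke $D$-geometric doubling (Definition \ref{geomdoub}) to cover the radius-$2r$ ball $B(x,2r)$ by at most $D$ balls of radius $r$, say $B(w_1,r),\dots,B(w_N,r)$ with $N\le D$, discarding any that do not meet $B(x,2r)$. Subadditivity then gives $\mu(B(x,2r))\le\sum_{i=1}^N\mu(B(w_i,r))$, so it suffices to show $\mu(B(w_i,r))\le C_\mu^{2K+2}\,\mu(B(x,r))$ for each retained ball.

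The heart of the argument is this per-ball comparison, and it is where the chain hypothesis enters. For each retained $B(w_i,r)$ I would choose a point $p_i\in B(w_i,r)\cap B(x,2r)$; since $p_i\in B(x,2r)$ we have $d(x,p_i)<2r$, so the hypothesis supplies a chain $B(x,r)=B(y_0,r),\dots,B(y_{m_i},r)$ with $m_i\le K$, with consecutive balls intersecting, and with $p_i\in B(y_{m_i},r)$. Because $p_i$ lies in both $B(w_i,r)$ and $B(y_{m_i},r)$, these two balls intersect, so appending $B(w_i,r)$ produces an intersecting chain from $B(x,r)$ to $B(w_i,r)$ with $m_i+1\le K+1$ consecutive intersections. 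Applying Lemma \ref{c2} once across each intersection multiplies the measure by at most $C_\mu^2$, giving $\mu(B(w_i,r))\le C_\mu^{2(m_i+1)}\,\mu(B(x,r))\le C_\mu^{2K+2}\,\mu(B(x,r))$.

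Summing over $i\le N\le D$ and using $C_\mu^{2K+2}\le C_\mu^{2K+3}$ (recall $C_\mu\ge 1$) then yields $\mu(B(x,2r))\le D\,C_\mu^{2K+3}\,\mu(B(x,r))$, which is exactly the asserted doubling bound; in fact the count gives the slightly sharper exponent $2K+2$, comfortably absorbed into the stated constant. I note that this also disposes of the degenerate case $\mu(B(x,r))=0$ without special treatment, since then each $\mu(B(w_i,r))$ vanishes by the same comparison.

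The step I expect to require the most care is the linking of the chain to the covering ball: one must check that the single point $p_i$ can play both roles at once, namely witnessing $d(x,p_i)<2r$ so that the chain hypothesis applies, and lying in $B(w_i,r)\cap B(y_{m_i},r)$ so that the terminal chain ball meets the covering ball. The associated bookkeeping, that the number of intersection comparisons is precisely $m_i+1$ and hence the exponent is $2(K+1)$ and not larger, is the only place an off-by-one could creep in. Everything else is routine: the geometric-doubling cover is standard, the discarded balls contribute nothing, and each measure comparison is a verbatim application of Lemma \ref{c2}.
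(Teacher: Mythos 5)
Your proof is correct and follows essentially the same argument as the paper: cover $B(x,2r)$ by at most $D$ balls of radius $r$, connect each retained covering ball back to $B(x,r)$ via the hypothesized chain, and apply Lemma \ref{c2} across each intersection before summing over the at most $D$ covering balls. The only difference is that you link the terminal chain ball directly to the covering ball (both contain the witness point $p_i$), whereas the paper passes through the additional intermediate ball $B(z,r)$ centered at that witness point, so you obtain the slightly sharper exponent $2K+2$ in place of $2K+3$, which is harmless since $C_\mu \ge 1$.
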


\begin{proof} Cover $B(x, 2r)$ with at most $D$ balls of radius $r$. Of course, any ball
that does not intersect $B(x, 2r)$ can be disregarded, so suppose $B(w, r)$
is one of the balls in the cover, and let $z\in B(w,r)\cap B(x, 2r)$. Since $d(x,z) < 2 r$, there
is   an intersecting chain of balls $B(x, r) = B(y_1,r), \dots, B(y_m, r)$
such that
$m\le K$ and $z\in B(y_m, r)$. By repeated application of Lemma \ref{c2},
together with the fact that $d(y_m, z) < r$,
we conclude that $
\mu(B(z,r)) \le C_\mu^{2 K + 1} \ \mu(B(x,r)),
$
so $
\mu(B(w,r)) \le C_\mu^{2 K + 3} \ \mu(B(x,r)),
$
and the result follows.
\end{proof}

Next we indicate some conditions ensuring that the hypothesis of the previous lemma
holds.

\begin{definition} \label{amp}  A metric space has the {\it approximate midpoint property} if for every
$\varepsilon > 0$ and every pair of points $x,y$, there exists a point $z$ such
that $d(x,z), d(z,y) <  \varepsilon + d(x,y)/2$.
\end{definition}

\begin{definition} \label{quasi}A metric space is  {\it quasiconvex} if there exists a constant
$C\ge 1$ such that for every
 pair of points $x,y$, there exists a curve with $x$ and $y$ as endpoints, such
 that its length is bounded above by $C d(x,y)$. 
 \end{definition}

 We say that  $B(y_0,r), \dots, B(y_m, r)$ form an intersecting chain of balls if
 for $j = 0, \dots, m-1$, $B(y_j, r) \cap  B(y_{j + 1}, r)\ne \emptyset$.

\begin{corollary}\label{SS3} Let $(X, d, \mu)$ be a metric measure space, 
$D$-geometrically doubling, and such that $\mu$ satisfies a  
local comparability
 condition. If 
 either all balls are connected, or $X$ is quasiconvex, or it has the approximate midpoint property, then $\mu$ is doubling. 
 \end{corollary}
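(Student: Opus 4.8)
The plan is to reduce all three cases to Lemma~\ref{SS2}, which already upgrades a uniform bound on chain lengths to the doubling property. Concretely, it suffices to produce in each case a constant $K>1$, independent of $x$, $z$ and $r$, such that whenever $d(x,z)<2r$ there is an intersecting chain of radius-$r$ balls $B(x,r)=B(y_0,r),\dots,B(y_m,r)$ with $m\le K$ and $z\in B(y_m,r)$. Once this is done for a given geometric hypothesis, Lemma~\ref{SS2} yields that $\mu$ is $D\,C_\mu^{2K+3}$-doubling.

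The approximate midpoint case (Definition~\ref{amp}) is the quickest: a single intermediate ball suffices. Given $d(x,z)<2r$, I would set $\varepsilon:=r-d(x,z)/2>0$ and choose an approximate midpoint $w$ with $d(x,w),d(w,z)<d(x,z)/2+\varepsilon=r$. Then $w\in B(x,r)\cap B(w,r)$ and $z\in B(w,r)$, so the chain $B(x,r),B(w,r)$ works and $K=2$. For the quasiconvex case (Definition~\ref{quasi}) I would join $x$ to $z$ by a rectifiable curve $\gamma$ of length $L\le C\,d(x,z)<2Cr$, parametrize it by arc length, and pick $0=t_0<\dots<t_m=L$ with consecutive gaps strictly less than $r$, which can be arranged with $m\le\lceil 2C\rceil$. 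Since $d(\gamma(t_i),\gamma(t_{i+1}))\le t_{i+1}-t_i<r$, consecutive balls $B(\gamma(t_i),r)$ intersect, while $\gamma(t_0)=x$ and $\gamma(t_m)=z$, so $K=\lceil 2C\rceil\ge 2$ suffices.

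The case in which all balls are connected is the delicate one, and I expect the intersection-graph argument to be the main obstacle. Fix $x,z$ with $d(x,z)<2r$, so $z\in B(x,2r)$. By $D$-geometric doubling (Definition~\ref{geomdoub}) the ball $B(x,2r)$ can be covered by at most $D$ balls of radius $r$; I would discard those that miss $B(x,2r)$ and adjoin $B(x,r)$, obtaining balls $B(w_0,r)=B(x,r),B(w_1,r),\dots,B(w_N,r)$ with $N\le D$, each meeting the connected set $B(x,2r)$ and still covering it. The key point is the elementary topological fact that, because $B(x,2r)$ is connected and is covered by these open balls, the graph whose vertices are the balls and whose edges join balls that intersect must be connected: any partition of the vertices into two classes with no crossing edge would express $B(x,2r)$ as the disjoint union of two nonempty relatively open sets, contradicting connectedness. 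Since $z$ lies in some $B(w_k,r)$, a graph path from $w_0$ to $w_k$ has length at most $N\le D$ and provides an intersecting chain $B(x,r)=B(y_0,r),\dots,B(y_m,r)\ni z$ with $m\le D$; hence $K=\max\{2,D\}$ works (if $D=1$ the space degenerates and doubling is immediate).

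With a uniform $K$ exhibited in each case, Lemma~\ref{SS2} applies and $\mu$ is doubling, completing the proof. The only genuinely nontrivial ingredient is the nerve-connectivity observation in the connected-balls case; the other two reductions are direct constructions of short chains from, respectively, a single approximate midpoint and a quasigeodesic of controlled length.
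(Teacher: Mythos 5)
Your proposal is correct and follows essentially the same route as the paper: in each case you build a uniformly bounded intersecting chain of radius-$r$ balls and invoke Lemma~\ref{SS2}, using the same disconnection argument on the cover of $B(x,2r)$ for the connected-balls case and the same subdivision of a quasigeodesic for the quasiconvex case. Your explicit treatment of the approximate midpoint case simply fills in the step the paper omits as ``similar and simpler.''
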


\begin{proof} Whenever we have a cover, we assume that all sets in it  intersect the set to be covered;
otherwise, 
 we disregard those not satisfying the condition.

 Fix $B_0 := B(x,r)$.  Suppose first that all the balls of $X$ are connected.
 Let the collection $\mathcal{C}:= \{B(x_1, r), \dots, B(x_M, r)\}$
 be a cover of $B(x, 2r)$ with
$M\le D$, and write $B_i := B(x_i, r)$. Let $\mathcal{C}^\prime$ be the collection of all balls $B_i$  in $\mathcal{C}$
for which there is an intersecting chain of balls starting at $B_0$ and finishing with $B_i$. 
Then $\mathcal{C} = \mathcal{C}^\prime$, for otherwise the union of all balls in $\mathcal{C}^\prime$ and the
union  of all balls in $\mathcal{C}\setminus \mathcal{C}^\prime$ would form a disconnection of $B(x, 2r)$.
Adding $B_0$ to the balls in the cover, we see that the maximal lenght of any chain is $M + 1 \le D + 1$.

Suppose next that $X$ is quasiconvex with constant $C_X$ (any
constant satisfying Definiton \ref{quasi}; there might
 not be a smallest one).  Choose $y $ with $d(x,y) < 2r$. Then
there exists a curve $c: [0,1]\to X$ starting at $x$ and finishing at $y$ (that is,
$c(0) = x$, $c(1) = y$) such that its length $L(c)$ satisfies $L(c) < C_X 2r$.
Let $K = [L(c)/r] + 1$, where $[L(c)/r]$ denotes the integer part of $L(c)/r$. 
Divide $c$ into $K$ subsegments of equal length, with endpoints $x_0 = x, x_1$, ...,
$x_{K-1}, x_K = y$. Then the balls   $\{B(x_0, r), \dots, B(x_{K - 1}, r)\}$
form an intersecting chain
of length $K \le [2 C_X] + 1$.

The argument for the case where $X$ has the approximate midpoint property
is similar and simpler, so we omit it.
\end{proof}

 If none of the  conditions in the preceding result hold, the equivalence between local comparability and doubling can fail,
 even for arc-connected spaces.

\begin{theorem} \label{arcconneted} There exists an arc-connected, geometrically doubling 
metric measure space $(X, d, \mu)$, such that $X\subset \mathbb{R}^2$,  $d$ is defined by the restriction
of the $\ell_\infty$-norm to $X$, and 
$\mu$ satisfies a 
local comparability
 condition but is not doubling. 
\end{theorem}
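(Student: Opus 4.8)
The plan is to exploit the one feature that every positive result of Section~5 rules out: since here $d$ is the \emph{ambient} $\ell_\infty$ metric rather than an intrinsic path metric, a curve may fold in the plane so that a piece lying far along the curve returns to within ambient distance less than $2r$ of an earlier piece, while no point of $X$ lies within distance $r$ of \emph{both}. This is precisely what Corollary \ref{SS3} forbids in its hypotheses: quasiconvexity, approximate midpoints, or connected balls all force short intersecting chains and hence doubling, so I will build $X$ so that none of these hold near the relevant scales. Note first that geometric doubling costs nothing: $(\mathbb{R}^2,\|\cdot\|_\infty)$ is $4$-geometrically doubling (a square of side $2r$ is covered by four squares of side $r$), and this passes to the subspace $X$. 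Hence, by Corollary \ref{blossombounds1}, to obtain local comparability it suffices to produce a uniform $K$ with $\mu(Bl(x,r,r))\le K\,\mu(B(x,r))$, equivalently (Definition \ref{loccomp}) a uniform $C$ with $\mu(B(x,r))\le C\,\mu(B(y,r))$ whenever $d(x,y)<r$; while failure of doubling will be witnessed directly by an unbounded ratio $\mu(B(x,2r))/\mu(B(x,r))$.

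For the construction I will take $X$ to be a single rectifiable arc in $\mathbb{R}^2$ carrying, at a sequence of widely separated scales $r_n\to\infty$ and widely separated locations, a \emph{fold gadget}. Each gadget consists of a light probe point $p_n$, through whose ambient $r_n$-neighborhood only a single strand of the arc passes, together with a tightly folded bundle of $N_n$ nearly parallel strands (all part of the one connected curve) compressed into a small $\ell_\infty$-box placed at ambient distance in the annulus $[\,r_n,\,2r_n)$ from $p_n$, with $N_n\to\infty$. The strand joining $p_n$ to the bundle is routed on a far detour, leaving the $r_n$-neighborhood of $p_n$ in the direction \emph{away} from the bundle and approaching the bundle from its far side, so that the only $X$-points within ambient distance $r_n$ of $p_n$ lie on the single local strand. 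I equip $X$ with one-dimensional length measure (a slowly varying density would serve equally well). In spirit this is the broom of Example \ref{broom} repaired: the heavy hub is pushed out of the probe's $r$-ball and into its $2r$-ball by using the ambient metric together with folding.

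Non-doubling is then the easy half. The ball $B(p_n,r_n)$ meets essentially one strand and so has mass comparable to $r_n$, whereas $B(p_n,2r_n)$ swallows the entire $N_n$-strand bundle and so has mass at least $c\,N_n r_n$ for a fixed $c>0$; hence $\mu(B(p_n,2r_n))/\mu(B(p_n,r_n))\ge c'\,N_n\to\infty$, and $\mu$ is not doubling.

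The substantial half is local comparability with a \emph{uniform} constant, and I expect it to be the main obstacle. Because the gadgets live at well-separated scales and places, at any given scale $r$ at most one gadget is active, while elsewhere $X$ is, at scale $r$, a family of strands of uniformly bounded multiplicity and slowly varying length-density, for which two concentric balls $B(x,r)$ and $B(y,r)$ with $d(x,y)<r$ visibly collect comparable total length. At an active fold the crux is a \emph{clean-fold} property: the detoured routing must guarantee that a light center is never within ambient distance $r$ of any bundle strand, so that the bundle's large mass can enter neither $B(y,r)$ for a neighbor $y$ of the probe nor, equivalently, the blossom $Bl(\cdot,r,r)$ of a light ball. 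The delicate point is that this cleanliness must hold not only at the design scale $r_n$ but uniformly across \emph{all} scales $r$: since a center $y$ within $r$ of $p_n$ can itself lie within ambient distance less than $r$ of the bundle, I must route the connecting strand so that this near-annular region stays free of $X$-points for every relevant $r$, ensuring that no $r$-chain of $X$-points ever bridges a light region to a heavy bundle inside a single blossom. Carrying out this quantitative routing, and verifying that the resulting $K$ is genuinely independent of $x$ and $r$, is the heart of the argument; once it is in place, the two estimates above finish the proof.
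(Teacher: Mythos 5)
Your reduction of local comparability to bounded blossoms and your non-doubling estimate are fine, but the step you defer --- the ``clean-fold'' routing --- is not a technical detail to be filled in later: it is where the construction fails, and I do not believe it can be repaired in the form you describe. The obstruction is the connecting strand itself. Since $X$ is a single arc containing both the bundle (call it $Q_n$, of $\ell_\infty$-diameter $\varepsilon_n\ll r_n$ and mass $M_n\gtrsim N_n r_n$) and points far away, the continuous function $F(w):=\mathrm{dist}_\infty(w,Q_n)$ takes every intermediate value along that strand. Fix any scale $\rho$ with $\varepsilon_n\ll\rho\le 2r_n$. If $F(y)<\rho-\varepsilon_n$ then the whole bundle lies in $B(y,\rho)$, so $\mu(B(y,\rho))\ge M_n$; if $F(x)\ge\rho$ then $B(x,\rho)$ misses the bundle entirely, so $\mu(B(x,\rho))=O(\rho)$ under your stipulation that strands elsewhere have bounded multiplicity. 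Local comparability with a uniform $C$ therefore forces \emph{every} such pair to satisfy $d(x,y)\ge\rho$. You propose to arrange this by spiraling inside the shell $\{\rho-\varepsilon_n\le F\le\rho\}$, but a packing count shows it cannot be done once $M_n/r_n$ is large: along the strand the mass $m_\rho(w)=\mu(B(w,\rho))$ must climb from $O(\rho)$ to $M_n$, comparability allows growth by at most a factor $C$ across ambient distance $\rho$, so the strand carries points $w_0,\dots,w_k$ with $m_\rho(w_i)\asymp C^i\rho$, $k\asymp\log_C(M_n/\rho)$, and any two whose indices differ by at least $2$ must be $\ge\rho$ apart. But every point whose $\rho$-ball meets the bundle lies within $\rho+\varepsilon_n$ of $Q_n$, so all these intermediate points sit in a set of $\ell_\infty$-diameter $\le 3\rho$, which contains at most $16$ points pairwise $\ge\rho$ apart; hence $M_n/\rho\le C^{O(1)}$, contradicting $N_n\to\infty$. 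The only escape is to let the halo near $Q_n$ carry extra mass so that ``light'' balls there are not $O(\rho)$, and that trades one failure for another: if the halo density decays polynomially in $F$, the ratio $\mu(B(p_n,2r_n))/\mu(B(p_n,r_n))$ stays bounded and non-doubling is lost; if it decays abruptly or exponentially, the same incomparable pairs reappear at the outer edge of the halo (this is exactly the phenomenon of Theorem \ref{exponential}: exponential decay is incompatible with a radius-independent comparability constant).

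The paper's proof uses a mechanism your gadget cannot imitate, and it is worth seeing why. There the heavy set is not localized: it is a ray at \emph{constant} $\ell_\infty$-distance $1$ from every light point. One takes two parallel horizontal rays at vertical distance $1$, density $1$ on the lower one and density $x$ on the upper one, joined by a vertical unit segment at the origin where the upper ray is still light. Then for $r\le1$ no two centers on opposite rays are at distance $<r$, while for $r>1$ the balls $B((x,0),r)$ and $B((x,1),r)$ literally \emph{coincide} as subsets of $X$; consequently the light-to-heavy jump happens purely in the radius variable (yielding non-doubling, since $\mu(B((x,0),2))/\mu(B((x,0),1))\to\infty$) and is invisible to local comparability, which only moves the center at a fixed radius. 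Comparability along the heavy ray holds because its density grows polynomially, via $\int_t^{t+2r}u\,du\le4\int_t^{t+r}u\,du$. In your construction the distance from a light point to the heavy set necessarily varies continuously along the arc, so the jump radius varies with the center, and that is precisely what manufactures the incomparable pairs above. If you want to salvage the write-up, discard the fold gadget and build the constant-distance parallel structure instead.
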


\begin{proof} On $\mathbb{R}$ set $d\nu (x) = dx$ for $x\le 1$, and  $d\nu (x) = x dx$ for $x\ge 1$ 
(we mention that  $\nu$ does not satisfy a local comparability condition, since
$\lim_{x\to\infty} \nu([0, x])/  \nu([-x, 0] )=\infty$, so in particular it is not doubling). 
Next we define an embedding $f : \mathbb{R} \to \mathbb{R}^2$
as follows: $f: (-\infty, -1] \to [0,\infty) \times \{0\}$ is given by $f(t) = (- 1 -t , 0)$, 
$f: [-1, 0] \to  \{0\}\times [0,1]$ is given by $f(t) = (0,1 + t)$, and 
$f: [0,\infty) \to [0,\infty) \times \{1\}$, by $f(t) = (t  , 1)$.
Then we set  $X:= f(\mathbb{R}) \subset \mathbb{R}^2$, with $d$ on $X$ 
defined by the $\ell_\infty$ norm on the plane:
$d((a,b), (c,d)) =  \max\{|a-c|,|b-d|\}$. Let $\mu$ be the pushforward measure $f_*\nu$, so
$\mu A := \nu (f^{-1} (A))$.   Then $\lim_{x\to\infty} \mu(B((x,0), 2)/ \mu(B((x,0), 1) =\infty$, so
$\mu$ is not doubling. However,
 since  $B((x,0), t) = B((x,1), t)$ for $t > 1$, while $B((x,0), t) \cap B((x,1), t) = \emptyset$ for $0 < t \le 1$
and $x\ge 1$, 
it  is not difficult to see that $\mu$ satisfies a local
comparability condition. More precisely, let $S:= f (-\infty, 1]$ and note that on $S$, $\mu$ is just length.
Thus, for balls centered at  points $x = (t, 0)$ or $x = (0, t)$ with $r\le 1$, the result is clear, while if $r > 1$, then $x = (t, 1)$
is also a center of $B((t,0), r))$, so it is enough to consider the case
$x = (t, 1)$, $t  \ge 0$. Since 
$$
\mu(S \cap B((t,1), r)) 
\ge
\mu(S \cap B((t + r,1), r)), 
$$
we have 
$$
\frac{\mu(B((t + r,1), r))}{ \mu(B((t,1), r))} 
= 
 \frac{\mu(S \cap B((t + r,1), r)) + \mu(S^c \cap B((t + r,1), r)) }{\mu(S \cap B((t,1), r)) + \mu(S^c \cap B((t,1), r))}
$$
$$
\le 
 \frac{\mu(S^c \cap B((t + r,1), r)) }{\mu(S^c \cap B((t,1), r))}
\le 
 \frac{\int_t^{t + 2 r} u du }{\int_t^{t + r} u du }
\le 4.
$$
(A more involved argument yields the optimal constant $C(\mu) = 2$).
\end{proof}

Note that in the space $X= f(\mathbb{R})$ defined above, if $x\ge 2$, then $Blu((x,0), 1, 1)$ does not
contain any ball strictly larger than $B((x,0), 1)$; in particular, for all 
$t > 0$, $B((x,0), 1 + t)\not\subset Blu((x,0), 1, 1)$. So even in arc-connected spaces,
blossoms and balls can be rather
different.  On the other hand, if there exists a fixed $t > 0$ such that for every $x\in X$ and every
$r > 0$, $B(x, (1 + t) r) \subset Bl(x, r, r)$, then boundedness of blossoms entails doubling.

\section{Remarks on spaces that may fail to be geometrically doubling}

Boundedness of blossoms does not imply that $X$ is geometrically doubling.

\begin{theorem}  \label{bb}There exists  a metric measure space $(X, d, \mu)$, such that $(X, d)$ is
not geometrically doubling,  $\mu$ satisfies a   
local comparability
 condition, and blossoms boundedly. 
 Furthermore, $\mu$ can be chosen to have full support.
 \end{theorem}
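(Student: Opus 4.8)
The plan is to exploit the remark made just before the statement, that bounded blossoming always entails local comparability; consequently it suffices to produce a space that is \emph{not} geometrically doubling and carries a full-support measure that blossoms boundedly. Ultrametric spaces are the natural place to look, because there intersecting balls of equal radius coincide (Example \ref{ultra}), so blossoms collapse onto the balls themselves. Indeed, if $d(x,z) < r$ forces $B(z,r) = B(x,r)$, then every ball $B(y,r)$ appearing in the definition of $Blu(x,r,r)$ already equals $B(x,r)$, whence $Bl(x,r,r) = B(x,r) = Blu(x,r,r)$ and $\mu$ blossoms boundedly with $K = 1$, regardless of the measure chosen.

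Concretely, I would take the construction foreshadowed in Example \ref{nondoub}: let $X = \mathbb{N} = \{1,2,3,\dots\}$ equipped with the discrete metric $d(m,n) = 1$ for $m \ne n$, which is an ultrametric. To guarantee finite ball measure together with full support, I would set $\mu := \sum_{n\ge 1} 2^{-n}\delta_n$, so that $\mu(X) = 1$ and every point carries positive mass; separability makes $\mu$ automatically $\tau$-smooth, so $(X,d,\mu)$ is a metric measure space in the sense of the paper.

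Next I would verify the three required properties. Failure of geometric doubling is witnessed at a single scale: for any $r \in (1,2]$ the ball $B(x,r)$ equals all of $X$, while each ball of radius $r/2 \le 1$ is a singleton $\{y\}$, so covering $X$ needs infinitely many balls of half the radius and no uniform $D$ can exist. Local comparability holds with $C(\mu)=1$ by Example \ref{ultra}, and bounded blossoming holds with $K=1$ by the ultrametric collapse described above; alternatively one simply notes that for $r \le 1$ both $B(x,r)$ and $Blu(x,r,r)$ equal $\{x\}$, while for $r > 1$ both equal $X$. Full support is immediate since $\mu\{n\} > 0$ for every $n$.

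This argument is essentially computational once the right space is chosen, so the only real issue is the selection of the example rather than any delicate estimate; the mild points to watch are the open-versus-closed ball convention (harmless here, as every ball is clopen) and the simultaneous demands of finite ball measure and full support, both met by the geometric weights $2^{-n}$. If one additionally wanted an example with no isolated points and a non-atomic full-support measure, I would replace the discrete space by the boundary of an infinitely branching rooted tree with its natural ultrametric and a compatible product measure; the same collapse $Blu(x,r,r) = B(x,r)$ keeps the verification of bounded blossoming trivial, at the cost of a longer description of the space.
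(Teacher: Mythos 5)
Your argument is correct, but it follows a genuinely different route from the paper's. The paper constructs $X$ inside the infinite broom with the path metric: it takes the points $z_n$ with $d(0,z_n)=1/n$ on distinct spikes, sets $X=\{0\}\cup\{z_n : n\ge 1\}$ and $\mu=\delta_0$ (adding mass $2^{-n}$ at each $z_n$ to get full support); geometric doubling then fails at arbitrarily small scales, because $B(0,1/n)$ contains the $n+1$ pairwise disjoint balls $B(0,\tfrac{1}{2n}), B(z_{n+1},\tfrac{1}{2n}),\dots,B(z_{2n},\tfrac{1}{2n})$, while local comparability and bounded blossoming (both with constant $1$) follow because intersecting balls of equal radius must all contain the origin, and blossoming a ball either does not enlarge it or does not increase its mass. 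You instead take the bounded, uniformly discrete ultrametric space $\mathbb{N}$ with $\mu=\sum_{n\ge 1} 2^{-n}\delta_n$, where the ultrametric collapse $Blu(x,r,r)=B(x,r)$ gives $K=C(\mu)=1$ for any measure whatsoever, and geometric doubling fails at the single scale $r\in(1,2]$ because there $B(x,r)=X$ is infinite while every ball of radius $r/2$ is a singleton. All your verifications (finiteness of ball measures, $\tau$-smoothness via separability, full support) are sound, so the theorem as stated is proved. The trade-off: your example is shorter, works for every finite full-support measure on the space, and achieves the optimal constants; the paper's example exhibits the failure of geometric doubling at arbitrarily small scales near an accumulation point, so the phenomenon is not an artifact of an infinite set of uniformly separated points with bounded diameter, and it reuses the broom geometry of Example \ref{broom}. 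It is worth noting that your space is precisely the variant already mentioned in Example \ref{nondoub}; your covering argument shows that, unlike the two-point space discussed there, that variant is in fact not geometrically doubling, which is exactly what Theorem \ref{bb} requires.
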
 

\begin{proof}  We use the infinite broom $B\subset \mathbb{R}^2$, 
 $B:=\cup_{n\in \mathbb{N}} \{(x, nx): x\ge 0\}$, with $d$  the path metric. 
Now for $n\ge 1$, 
 let $z_n\in \{(x, nx): x\ge 0\}$ be the only point that satisfies
$d(0, z_n)=1/n$. Set $X :=\{0\}  \cup \{z_n : n \ge 1\} \subset B$, with the distance inherited from $B$.
To see that $(X, d)$ is not geometrically doubling, note that $B(0,1/n)$ contains the following disjoint
balls: $B(0,1/(2n))$, $B(z_{n + 1}, 1/(2n))$, $\dots$, $B(z_{2n}, 1/(2n))$.
Next, let $\mu = \delta_0$ be the point mass at the origin. It two balls $B_1, B_2$ centered at points of $X$
intersect, they both must contain $0$, so $\mu B_1 = \mu B_2 = 1$, and hence $C(\mu) = 1$. Blossoming
at $0$ does not change the mass, while blossoming at other points leads to either not increasing
the original ball, or  not increasing its measure: If $0\notin B(z_{k}, r)$, then
$Blu(z_{k}, r, r) = B(z_{k}, r)$, and if $0\in B(z_{k}, r)$, then $\mu Blu(z_{k}, r, r) = \mu B(z_{k}, r) = 1$.

One can easily
modify $\mu$ so that it also has full support: In addition to the Dirac delta at the origin,  give mass $2^{-n}$
to each $z_n$, and argue essentially as before. 
\end{proof} 

Beyond geometrically doubling metric spaces, it is unclear to me 
whether or not local comparability suffices to obtain boundedness
results for the Hardy-Littlewood maximal operator. Of course, if blossoms are
bounded, then the usual Vitali covering argument works.  Nevertheless, in specific and natural
examples, such as volume in  hyperbolic spaces (cf. \cite{Str}, \cite{LiLo}), blossoms
are not bounded, and still  weak type $(1,1)$ bounds hold for the centered maximal
operator (the uncentered operator is in general unbounded, as can be seen by
considering just one Dirac delta in the hyperbolic plane). A rather different 
instance of this phenomenon is presented in \cite[Theorem 1.5]{NaTa}, where boundedness of the
maximal operator is obtained for the infinite, rooted $k$-ary tree with the
standard graph metric.
In these examples, however, 
one not only has local, but global comparability with constant 1, since the
measure of balls only depends on the radius, and not the center.

While writing this paper I found \cite{SoTr}, where local comparability is considered,
under the name of  ``equidistant comparability property", in the specific case of connected
graphs with all vertices having finite degree. 
Here $X$ is the set of vertices $V$ of a graph,  $d(x,y)$ is defined as the smallest
number of edges one needs to traverse in order  to go from $x$ to $y$, and $\mu$ is the counting
measure. Since all vertices have finite degree,   the measure of all balls is finite, and we
are within the general framework considered in the present paper. For instance, it 
is easy to check that for the metric measure spaces considered here, 
the centered maximal operator acting on just one Dirac delta is
weak (1,1) bounded, with bound $C_\mu$ (this is so even if the Dirac delta
is placed at a point outside the support of $\mu$). The special case of this
result for
graphs appears in \cite[Proposition 2.10]{SoTr}.

\end{document}